\def \qed {\hfill \vrule height6pt width 6pt depth 0pt}
\def\textmatrix#1&#2\\#3&#4\\{\bigl({#1 \atop #3}\ {#2 \atop #4}\bigr)}
\def\dispmatrix#1&#2\\#3&#4\\{\left({#1 \atop #3}\ {#2 \atop #4}\right)}
\newcommand{\beg}{\begin{equation}}
	\newcommand{\eeg}{\end{equation}}
\newcommand{\ben}{\begin{eqnarray*}}
	\newcommand{\een}{\end{eqnarray*}}
\newtheorem{thm}{Theorem}[section]
\newtheorem{cor}[thm]{Corollary}
\newtheorem{lem}[thm]{Lemma}
\newtheorem{prop}[thm]{Proposition}
\numberwithin{equation}{section} \theoremstyle{definition}
\newtheorem{defn}[thm]{Definition}
\newtheorem{rem}[thm]{Remark}
\newtheorem{eg}[thm]{Example}
\newcommand{\C}{\mathbb{C}}
\newcommand{\D}{\mathbb{D}}
\newcommand{\T}{\mathbb{T}}
\newcommand{\N}{\mathbb{N}}
\newcommand{\HS}{\mathcal{H}}
\def\textmatrix#1&#2\\#3&#4\\{\bigl({#1 \atop #3}\ {#2 \atop #4}\bigr)}
\def\dispmatrix#1&#2\\#3&#4\\{\left({#1 \atop #3}\ {#2 \atop #4}\right)}
\title[Decompositions of $Q$-commuting contractions]{Theory of $Q$-commuting contractions: joint reducing subspaces and orthogonal decompositions}
\author[PAL, SAHASRABUDDHE AND TOMAR]{SOURAV PAL, PRAJAKTA SAHASRABUDDHE AND NITIN TOMAR}
\address[Sourav Pal]{Mathematics Department, Indian Institute of Technology Bombay,
	Powai, Mumbai - 400076, India.} \email{sourav@math.iitb.ac.in}
\address[Prajakta Sahasrabuddhe]{Mathematics Department, Indian Institute of Technology Bombay, Powai, Mumbai-400076, India.} \email{prajakta@math.iitb.ac.in}	
\address[Nitin Tomar]{Mathematics Department, Indian Institute of Technology Bombay, Powai, Mumbai-400076, India.} \email{tnitin@math.iitb.ac.in}	
\keywords{$Q$-commuting contractions, Canonical decomposition, Wold decomposition}	
\subjclass[2020]{47A13, 47A15, 47A65}	
\begin{document}

\maketitle

\begin{abstract}
A family $\mathcal{T}=\{T_i: i \in J\}$ of operators acting on a Hilbert space $\HS$ is said be \textit{$Q$-commuting} if there exists a commuting family $Q=\{Q_{ij} \in \mathcal{B}(\HS) \ : \ Q_{ij}=Q_{ji}^*, i \ne j \in J \}$ of unitaries such that $T_iT_j=Q_{ij}T_jT_i$ and $T_kQ_{ij}=Q_{ij}T_k$ for all $i, j, k$ in $J$ with $i \ne j$. The family $\mathcal{T}$ is said to be \textit{doubly $Q$-commuting} if it is $Q$-commuting and $T_i T_j^*=Q_{ij}^*T_j^*T_i$ for all $i, j$ in $J$ with $i \ne j$. In this article, we obtain the following main results.

\begin{enumerate}
\item[(i)] It is well known that any number of doubly commuting isometries admit a Wold-type decomposition. Also, examples from the literature show that such decomposition does not hold in general for commuting isometries. We generalize this result to any doubly $Q$-commuting family of contractions and get a canonical decomposition for them. Then we obtain a Wold-type decomposition for any doubly $Q$-commuting family of isometries as a special case.

\smallskip 

\item[(ii)] Further, we show that a similar decomposition is possible for an arbitrary doubly $Q$-commuting family of c.n.u. contractions, where all members jointly orthogonally decompose into pure isometries (i.e. unilateral shifts) and completely non-isometry (c.n.i.) contractions. 

\smallskip

\item[(iii)] We present a decomposition result for $Q$-commuting tuples of contractions which is analogous to Z. Burdak's work on commuting contractions.

\smallskip

\item[(iv)] Also, we provide decomposition results for a pair of contractions $(T_1, T_2)$ satisfying any of the following relations:
\[
T_1 T_2 = QT_2 T_1, \quad T_1 T_2 = T_2 Q T_1,  \quad T_1 T_2 = T_2 T_1 Q,
\]
where $Q$ is any unitary that commutes with the product $T_1 T_2$.
\end{enumerate}		
	\end{abstract} 
	
	\section{Introduction}\label{sec01}
	
		\vspace{0.2cm}
	
	\noindent Throughout the paper, all operators are bounded linear maps defined on a complex Hilbert space. For two Hilbert spaces $\HS_1, \HS_2$, the algebra of operators from $\HS_1$ to $\HS_2$ is denoted by $\mathcal B(\HS_1, \HS_2)$ and the notation $\mathcal B(\HS)$ is used when $\HS_1=\HS_2=\HS$. A \textit{contraction} is an operator with norm at most $1$. For a contraction $T$ on a Hilbert space $\HS$, $D_T=(I-T^*T)^{1\slash 2}$ denotes the positive square root of $I-T^*T$ and $\mathcal{D}_T=\overline{D_T\HS}$. We denote by $\mathbb{C}, \mathbb{D}$ and $\mathbb{T}$ the complex plane, the unit disk and the unit circle in the complex plane, respectively, with center at the origin.
	
	\smallskip
	
Determining the common reducing subspaces of a family of commuting operators has always been fascinating to the operator theorists. The first step in this direction was the famous von Neumann-Wold decomposition of an isometry. 
	\begin{thm}[von Neumann-Wold, \cite{Wold}]\label{thm_Wold}
		Let $V$ be an isometry on a Hilbert space $\mathcal{H}$. Then there is a decomposition of $\mathcal{H}$ into an orthogonal sum of two subspaces reducing $V$, say $\mathcal{H}=\mathcal{H}_0 \oplus \mathcal{H}_1$ such that $V|_{\mathcal{H}_{0}}$ is a unitary and $V|_{\mathcal{H}_1}$ is a pure isometry, i.e. a unilateral shift. This decomposition is uniquely determined. Indeed, we have 
		\[
		\HS_0=\bigcap_{k=0}^\infty V^k\HS \quad \text{and} \quad \HS_1=\overset{\infty}{\underset{k=0}{\oplus}}V^k\mathcal{L} \quad \text{where} \quad \mathcal{L}=\HS \ominus V\HS.
		\]
	\end{thm}	

The space $\mathcal{H}_0$ in this decomposition is the maximal reducing subspace on which $V$ acts as a unitary. Naturally, the idea of separating the maximal unitary part from an isometry was further extended to any contraction which is widely known as the \textit{canonical decomposition} of a contraction. 	
		
	\begin{thm}[\cite{Nagy}, Chapter I, Theorem 3.2]\label{thm104}
		For every  contraction $T$ on a Hilbert space $\mathcal{H}$, there is a unique  decomposition of $\mathcal{H}$ into an orthogonal sum of two subspaces reducing $T$, say $\mathcal{H}=\mathcal{H}_0 \oplus \mathcal{H}_1$, such that $T|_{\mathcal{H}_0}$ is a unitary and $T|_{\mathcal{H}_1}$ is a completely non-unitary (c.n.u.) contraction. Either of the subspaces $\mathcal{H}_0$ or $\mathcal{H}_1$ may coincide with the zero subspace.  Indeed, $\mathcal{H}_0$ is the set of all elements in $\mathcal{H}$ such that 
		\[
		\|T^nh\|=\|h\|=\|T^{*n}h\|, \quad n=1,2, \dotsc \ .
		\]  
	\end{thm}

In \cite{Levan}, Levan went one step ahead and managed to decompose a c.n.u. (completely non-unitary) contraction further into two orthogonal parts.
	
\begin{thm}[\cite{Levan}, Theorem 1]\label{thm103}
		Let $T$ be a c.n.u. contraction acting on a Hilbert space $\mathcal{H}$. Then $\mathcal{H}$ decomposes into an orthogonal sum $\mathcal{H}=\mathcal{H}_1\oplus \mathcal{H}_2$ such that $\mathcal{H}_1, \mathcal{H}_2$ reduce $T$ and that $T|_{\mathcal{H}_1}$ is an isometry and $T|_{\mathcal{H}_2}$ is a completely non-isometry (c.n.i.) contraction.
	\end{thm}

There are various analogues of Wold decomposition of an isometry in several variable setting, e.g. \cite{BCL, BurdakI, BurdakII, Popovici, Slocinski} and an appealing among them is due to S\l oci\'{n}ski. 
	
	\begin{thm}[M. S\l oci\'{n}ski, \cite{Slocinski}]\label{thm_Slo}
		Let $(V_1, V_2)$ be a doubly commuting pair of isometries on a Hilbert space $\mathcal{H}$. Then there is a unique decomposition 
		$
			\mathcal{H}=\mathcal{H}_{uu} \oplus \mathcal{H}_{us} \oplus \mathcal{H}_{su} \oplus  \mathcal{H}_{ss},
	$ where $\mathcal{H}_{uu}, \mathcal{H}_{us},\mathcal{H}_{su},\mathcal{H}_{ss}$ are closed subspaces of $\mathcal{H}$ reducing $V_1$ and $V_2$
		such that 
		\begin{enumerate}
			\item[(a)] $V_1|_{\mathcal{H}_{uu}}$ and $V_2|_{\mathcal{H}_{uu}}$ are unitaries; 
			\item[(b)] $V_1|_{\mathcal{H}_{us}}$ is a unitary and $V_2|_{\mathcal{H}_{us}}$ is a shift;
			\item[(c)] $V_1|_{\mathcal{H}_{su}}$ is a shift and $V_2|_{\mathcal{H}_{su}}$ is a unitary; 
			\item[(d)] $V_1|_{\mathcal{H}_{ss}}$ and $V_2|_{\mathcal{H}_{ss}}$ are shifts.
		\end{enumerate}
		Some of the spaces among $\mathcal{H}_{uu}, \mathcal{H}_{us}, \mathcal{H}_{su}$ and  $\mathcal{H}_{ss}$ may coincide with the trivial space $\{0\}$.
	\end{thm}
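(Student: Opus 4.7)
The plan is a double application of the von Neumann--Wold decomposition, using the doubly commuting relation at each stage to ensure that a splitting performed on one isometry is preserved by the other.

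First I would apply Wold's theorem to $V_1$ alone, writing $\mathcal{H}=\mathcal{K}_u\oplus\mathcal{K}_s$, where $\mathcal{K}_u=\bigcap_{n\geq 0}V_1^n\mathcal{H}$ is the maximal subspace on which $V_1$ is unitary and $\mathcal{K}_s=\bigoplus_{n\geq 0}V_1^n(\ker V_1^*)$ is the shift part. From $V_1V_2=V_2V_1$ one gets $V_2(V_1^n\mathcal{H})\subseteq V_1^n\mathcal{H}$, hence $V_2\mathcal{K}_u\subseteq\mathcal{K}_u$. From the doubly commuting relation $V_1V_2^*=V_2^*V_1$ one similarly gets $V_2^*\mathcal{K}_u\subseteq\mathcal{K}_u$. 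So $\mathcal{K}_u$ reduces $V_2$, and by orthogonality so does $\mathcal{K}_s$. Since restrictions of isometries to reducing subspaces are isometries, $V_2|_{\mathcal{K}_u}$ and $V_2|_{\mathcal{K}_s}$ are isometries.

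Next I would apply Wold's theorem to $V_2|_{\mathcal{K}_u}$ to split $\mathcal{K}_u=\mathcal{H}_{uu}\oplus\mathcal{H}_{us}$ (unitary $\oplus$ shift for $V_2$), and to $V_2|_{\mathcal{K}_s}$ to split $\mathcal{K}_s=\mathcal{H}_{su}\oplus\mathcal{H}_{ss}$. The critical point is to show these four subspaces also reduce $V_1$. Taking adjoints of the two given relations yields $V_1^*V_2=V_2V_1^*$ and $V_1^*V_2^*=V_2^*V_1^*$, so both $V_1$ and $V_1^*$ commute with both $V_2$ and $V_2^*$. This makes $\bigcap_n V_2^n\mathcal{K}_u$ and $\ker V_2^*|_{\mathcal{K}_u}$ (and hence the entire second-stage Wold splitting) invariant under $V_1$ and $V_1^*$; the same argument works on $\mathcal{K}_s$. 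Combining the two stages gives four simultaneously reducing subspaces, and on each factor the unitary/shift type of $V_1$ (resp.\ $V_2$) is inherited from the first (resp.\ second) Wold step.

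For uniqueness, I would characterize the four pieces intrinsically: $\mathcal{H}_{uu}$ is the largest subspace reducing both operators on which both are unitary, $\mathcal{H}_{us}$ is the largest reducing subspace on which $V_1$ is unitary and $V_2$ is a shift, and similarly for $\mathcal{H}_{su}$ and $\mathcal{H}_{ss}$. Since each of these is forced by the individual Wold decompositions (which are themselves unique), the four pieces are determined.

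The main obstacle is the invariance step: one must verify that the Wold subspaces associated with $V_2$ inside $\mathcal{K}_u$ and $\mathcal{K}_s$ reduce $V_1$. This is precisely where doubly commuting is indispensable, since ordinary commutativity gives only $V_1V_2^n\mathcal{K}_u\subseteq V_2^n\mathcal{K}_u$ but not the symmetric inclusion for $V_1^*$; without the adjoint relation one cannot control $V_1^*$ on the wandering subspace $\ker V_2^*$, and indeed the theorem fails for merely commuting pairs, as mentioned earlier in the excerpt.
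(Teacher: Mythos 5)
Your argument is correct and is essentially the same approach the paper uses: the paper quotes Slocinski's theorem without proof, but its own Theorems \ref{reducing} and \ref{decomposition of doubly q-commuting} establish the $q$-commuting analogue by exactly this two-stage scheme --- decompose with respect to $V_1$, use the doubly commuting relations to show both pieces reduce $V_2$, decompose each piece with respect to $V_2$, and obtain uniqueness from the uniqueness of the single-operator decompositions (cf.\ Remark \ref{decomposition remark} and Proposition \ref{Uniqueness}). The only cosmetic difference is that the paper's reducing lemma verifies invariance through the defect operators $D_{B(n)}$ so as to cover general contractions, whereas you exploit the range-intersection form $\bigcap_{n\geq 0}V_1^n\mathcal{H}$ available for isometries.
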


	 S\l oci\'{n}ski's decomposition is complete in the sense that it leaves no scope of decomposing the original operator pair any further. However, such a decomposition is not possible for a pair of commuting isometries as was shown in \cite{BurdakII, Slocinski}. Interestingly, Popovici \cite{Popovici} provided a new way of decomposing a pair of commuting isometries. S\l oci\'{n}ski's decomposition was generalized in \cite{BurdakII} to any family of doubly commuting isometries with finite dimensional wandering subspaces. Additionally, there are canonical decomposition for commuting contractions in various directions and under different conditions, e.g. \cite{Albrecht, BCL, BurdakI, BurdakIII,  Cate, Popovici}. Eschmeier's work \cite{Esch} contributes significantly to this area. In \cite{Pal}, the first named author of this paper generalized these decomposition results to any family of doubly commuting contractions using a new technique. These compelling works motivate us to study the joint reducing subspaces and orthogonal decompositions for operators that satisfy $Q$-commuting relations.

\begin{defn}
		A family $\mathcal{T}=\{T_i: i \in J\}$ of operators acting on a  Hilbert space $\HS$ is said be \textit{$q$-commuting} if there is a family of non-zero scalars $q=\{q_{ij} \in \C : q_{ij}=q_{ji}^{-1}, i \ne j \in J \}$ such that $T_iT_j=q_{ij}T_jT_i$ for all $i, j \in J$ with $i \ne j$. We say that $\mathcal{T}$ is a \textit{$Q$-commuting} family if there exists a commuting family $Q=\{Q_{ij} \in \mathcal{B}(\HS) \ : \ Q_{ij}=Q_{ji}^*,  \ i \ne j \in J \}$ of unitaries such that 
		$
		T_i T_j =Q_{ij}T_j T_i$ and $T_kQ_{ij}=Q_{ij}T_k$
		for all $i \ne j$ and $k$ in $J$.  We simply say that $\mathcal{T}$ is a $Q$-commuting family without referring the family $Q$. If each $Q_{ij}=q_{ij}I$ with $|q_{ij}|=1$, we say that $\mathcal{T}$ is a \textit{$q$-commuting} family with $\|q\|=1$.
	\end{defn}
	
\begin{defn}\label{defn_dc_q} 
		A family $\mathcal{T}=\{T_i: i \in J\}$ of operators acting on a  Hilbert space $\HS$ is said be \textit{doubly $Q$-commuting} if there is a commuting family $Q=\{Q_{ij} \in \mathcal{B}(\HS) \ : \ Q_{ij}=Q_{ji}^*, \   i \ne j \in J \}$ of unitaries such that 
		$
		T_i T_j =Q_{ij}T_j T_i, \, T_i T_j^*=Q_{ij}^*T_j^*T_i$ and $T_kQ_{ij}=Q_{ij}T_k$
		for all $i \ne j$ and $k$ in $J$. We say that $\mathcal T$ is a doubly $Q$-commuting family without mentioning the family $Q$. If each $Q_{ij}=q_{ij}I$ with $|q_{ij}|=1$, we say that $\mathcal{T}$ is a \textit{doubly $q$-commuting} family.
\end{defn}

Let $\mathcal T$ be a $Q$-commuting family of operators acting on a Hilbert space $\HS$. A closed linear subspace $\mathcal{L} \subseteq \HS$ is said to be a \textit{joint reducing subspace} for  $\mathcal T$ and $Q$ if $\mathcal{L}$ is a reducing subspace for each member of $\mathcal T$ and $Q$. 

\vspace{0.1cm}

In this paper, we generalize the classical decomposition results to any family of $Q$-commuting and doubly $Q$-commuting contractions in different ways. In Section \ref{Prep section}, we define doubly $q$-commuting contractions appropriately and describe the motivation behind defining this way. Also, there we prove a few preparatory results. In Sections \ref{sec03} \& \ref{sec04}, we generalize the decomposition results in Theorems \ref{thm_Wold}, \ref{thm104}, \ref{thm103} \& \ref{thm_Slo} to any family of doubly $Q$-commuting contractions. Our first main result in this context is a fine canonical decomposition for any doubly $Q$-commuting family of contractions. In Theorem  \ref{thm303}, we show that for any doubly $Q$-commuting family of contractions $\mathcal{T}=\{T_i : i \in J\}$ acting on a Hilbert space $\mathcal{H},$ the space $\mathcal{H}$ continues to decompose orthogonally into joint reducing subspaces of $\mathcal{T}$ until all members of $\mathcal{T}$ split together into unitaries and c.n.u. contractions. Consequently, we obtain a von Neumann-Wold decomposition for any doubly $Q$-commuting family of isometries in Theorem \ref{Wold general case}. In Section \ref{sec04}, we find a Levan-type decomposition (as in Theorem \ref{thm103}) for any family of doubly $Q$-commuting c.n.u. contractions, where all members admit a simultaneous decomposition into isometries and c.n.i. contractions. 
	
	\vspace{0.1cm}
	
 The decomposition results achieved so far are based on the bound of double $Q$-commutativity. In Theorem \ref{thm501}, we present a decomposition of a different kind for a finite tuple of $Q$-commuting contractions which generalizes Burdak's work \cite{BurdakIII} for a pair of commuting contractions. As a special case, we have a Wold-type decomposition for finitely many $Q$-commuting isometries in Theorem \ref{prop503}. In Section \ref{sec07}, we consider a pair of contractions $(T_1, T_2)$ for which there is a unitary $Q$  such that any one of $T_1T_2=QT_2T_1$, $T_1T_2=T_2QT_1$, $T_1T_2=T_2T_1Q$ holds. We prove a decomposition result in Theorem \ref{thm712} for such a pair of contractions under the weaker hypothesis that $Q$ commutes with the product $T_1T_2$.
 
\vspace{0.1cm}

\noindent \textbf{Note.} After writing the first draft of this paper, we learned that Theorem \ref{thm303F} of this article is independently proved by Majee and Maji as Theorem 4.2 in \cite{Maji}. However, our proof to this result here is based on different techniques.

\vspace{0.1cm}
	
	\section{Basic terminologies and preparatory results }\label{Prep section}
	
		\vspace{0.2cm}
	
\noindent  We recall from the literature a few significant classes of contractions and introduce some new terminologies useful for our purposes.
	
	\begin{defn} A contraction $T$ acting on a Hilbert space $\mathcal{H}$ is called
		\begin{enumerate}
			\item a \textit{completely non-unitary}, or simply a \textit{c.n.u.}, if there is no closed subspace of $\mathcal{H}$ that reduces $T$ and on which $T$ acts as a unitary.
			\item a \textit{completely non-isometry}, or simply a \textit{c.n.i.}, if there is no closed subspace of $\mathcal{H}$ that reduces $T$ and on which $T$ acts as an isometry.
			\item a \textit{pure contraction}, or a $C._0$ \textit{contraction}, if ${T^{*}}^nx \to 0$ as $n \to \infty$ for every $x \in \mathcal{H}.$
			\item a \textit{pure isometry}, or a \textit{unilateral shift}, if $T$ is an isometry which is also a pure contraction. 
		\end{enumerate}	
	\end{defn}
	
Let us recall that a family $\mathcal{T}=\{T_i : i \in J\}$ of operators is called $q$-commuting with $\|q\|=1$ if there is a set $q=\{q_{ij} \in \T :\, q_{ij}= \overline{q}_{ji},\, i \ne j \in J \}$ such that $T_iT_j=q_{ij}T_jT_i$ for all $ i, j \in J$ with $i \ne j$. In addition, if $T_iT_j^*=\overline{q}_{ij}T_j^*T_i$ for $i, j \in J$ with $i \ne j$, then $\mathcal{T}$ is called doubly $q$-commuting whereas $\mathcal T$ is doubly commuting if $T_iT_j^*=T_j^*T_i$ for all $i,j$ in $J$ with $i \ne j$. The following observations justify the reason behind defining doubly $q$-commuting family in this manner.  
	
\begin{enumerate}

			\item Let $T_1, T_2$ be a $q$-commuting pair of operators and let $T_2$ be a unitary. Then
			\begin{equation*}
				T_1T_2=qT_2T_1 \implies T_2^*T_1=qT_1T_2^* \implies T_1T_2^*=q^{-1}T_2^*T_1.
			\end{equation*}
			Furthermore, $T_1=T_2(qT_1)T_2^*$ implying that $T_1$ and $qT_1$ are unitarily equivalent. Consequently, $\|T_1\|=\|qT_1\|$ which holds if and only if $|q|=1$ unless $T_1$ is zero. So, if $T_1 \neq 0$, then we must have $T_1T_2^*=\overline{q}T_2^*T_1$ and $|q|=1$.
			
			\smallskip
			
\item Let us consider a $q$-commuting pair of $n \times n$ matrices $(T, N)$ such that $N$ is normal. We can assume using the spectral theorem that $N$ is a diagonal matrix with respect to some fixed orthonormal basis and so, we have that
			\[
			T=\begin{bmatrix}
				a_{11}& a_{12} & \dotsc & a_{1n}\\
				a_{21} & a_{22} & \dotsc & a_{2n}\\
				\dotsc & \dotsc & \ddots & \dotsc \\
				a_{n1} & a_{n2} & \dotsc & a_{nn}\\
			\end{bmatrix} \quad \mbox{and} \quad 
			N=\begin{bmatrix}
				\lambda_{1}& 0 & \dotsc & 0\\
				0 & \lambda_2 & \dotsc & 0\\
				\dotsc & \dotsc & \ddots & \dotsc \\
				0 & 0 & \dotsc & \lambda_n \\
			\end{bmatrix}.
			\]
			It follows from some routine calculations that 
			\[
			TN-qNT=\left[(\lambda_j-q\lambda_i)a_{ij}\right]_{i, j=1}^n \quad \text{and} \quad TN^*-\overline{q}N^*T=\left[\overline{(\lambda_j-q\lambda_i)}a_{ij}\right]_{i, j=1}^n.
			\]
			Since $TN=qNT$, we must have $(\lambda_j-q\lambda_i)a_{ij}=0$ and so, $\overline{(\lambda_j-q\lambda_i)}a_{ij}=0$ for each $i, j$. Therefore, $TN^*=\overline{q}N^*T.$
		\end{enumerate}	
	
It is not difficult to provide a generalization of the above remarks using the following result.
	
\begin{thm}[Fuglede-Putnam, \cite{Fuglede, Putnam}]\label{Putnam}
		Let $T, M$ and $N$ be operators acting on a Hilbert space $\HS$. If $M$ and $N$ are normal such that $MT=TN$, then $M^*T=TN^*$.
	\end{thm}

	\begin{prop}\label{normal}
		Let $(T_1, T_2)$ be a $q$-commuting pair of operators acting on a Hilbert space $\HS$. Then the following holds. 
		\begin{enumerate}
			\item[(i)] If $T_1$ is normal, then $T_1T_2^*=q^{-1}T_2^*T_1;$
			\item[(ii)] If $T_2$ is normal, then $T_1T_2^*=\overline{q}T_2^*T_1;$
			\item[(iii)] If $T_1$ and $T_2$ are normal with $T_1^*T_2 \neq 0$, then $|q|=1$.
		\end{enumerate}
	\end{prop}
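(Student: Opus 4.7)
The plan is to apply the Calvin--Richard--Putnam theorem (Theorem \ref{Putnam}) in two different ways, exploiting the fact that a scalar multiple of a normal operator is normal. The $q$-commutation $T_1T_2 = qT_2T_1$ can be rewritten either as $T_1 T_2 = T_2(qT_1)$ or as $(qT_2)T_1 = T_1 T_2$, and each form sets up a Putnam-style intertwining relation with one of the two operators playing the role of the intertwiner.

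For (1), rewrite the relation as $T_1 T_2 = T_2 (q T_1)$. Since $T_1$ is assumed normal and normality is preserved under multiplication by a scalar, both $M := T_1$ and $N := q T_1$ are normal. With $T := T_2$ we have $MT = TN$, so Putnam's theorem yields $M^* T = T N^*$, that is, $T_1^* T_2 = \overline{q}\, T_2 T_1^*$. Taking adjoints of both sides gives $T_2^* T_1 = q\, T_1 T_2^*$, and dividing by $q$ produces $T_1 T_2^* = q^{-1} T_2^* T_1$, as claimed.

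For (2), rewrite the same relation as $(q T_2) T_1 = T_1 T_2$. Now set $M := q T_2$, $N := T_2$, both normal since $T_2$ is, and $T := T_1$. The identity $MT = TN$ holds by construction, and Putnam's theorem delivers $M^* T = T N^*$, i.e. $\overline{q}\, T_2^* T_1 = T_1 T_2^*$, which is exactly $T_1 T_2^* = \overline{q}\, T_2^* T_1$.

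For (3), combine (1) and (2): under the assumption that both $T_1$ and $T_2$ are normal, both identities $T_1T_2^* = q^{-1}T_2^* T_1$ and $T_1T_2^* = \overline{q}\, T_2^*T_1$ hold, hence
\[
(q^{-1} - \overline{q})\, T_2^* T_1 \;=\; 0.
\]
Since $T_1^*T_2 = (T_2^*T_1)^*$, the hypothesis $T_1^*T_2 \ne 0$ is equivalent to $T_2^*T_1 \ne 0$, and then the displayed equation forces $q^{-1} = \overline{q}$, i.e.\ $|q|^2 = q\overline{q} = 1$. The only genuine subtlety is recognizing that Putnam applies to the rewrites $M = T_1,\, N = qT_1$ and $M = qT_2,\, N = T_2$, where the scalar $q$ has been absorbed into one side in each case; once that rewriting is in place the proof reduces to taking adjoints and combining the two relations.
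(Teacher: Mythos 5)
Your proposal is correct and follows essentially the same route as the paper: both arguments apply the Putnam theorem to the intertwining relations $T_1T_2 = T_2(qT_1)$ and $(qT_2)T_1 = T_1T_2$, using that scalar multiples of normal operators are normal, and then combine the two identities for part (3). Your extra remark that $T_1^*T_2 \ne 0$ is equivalent to $T_2^*T_1 \ne 0$ (via adjoints) is a small but welcome clarification that the paper glosses over.
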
	
	
\begin{proof} The proof is an immediate consequence of Theorem \ref{Putnam}.
		\begin{enumerate}
			\item[(i)]  Let $T_1$ be normal and let $M=T_1, N=qT_1$ and $T=T_2$. By $q$-commutativity condition, we have that $MT=TN$. It follows from Theorem \ref{Putnam} that $T^*M=NT^*$ and so, $T_2^*T_1=qT_1T_2^*$.
			
\item[(ii)]  Assume that $T_2$ is normal. Define $M=qT_2, N=T_2$ and $T=T_1$. We have by Theorem \ref{Putnam} that $M^*T=TN^*$ and thus, $\overline{q}T_2^*T_1=T_1T_2^*$.
			
\item[(iii)] Let $T_1$ and $T_2$ be normal operators. It follows from the previous parts that $T_1T_2^*=\overline{q}T_2^*T_1=q^{-1}T_2^*T_1$. Therefore, $|q|=1$ if $T_1^*T_2 \ne 0$. The proof is complete.
		\end{enumerate} 
	\end{proof}
	
Needless to mention that Proposition \ref{normal} holds if $q$ is a unitary operator that commutes with $T_1, T_2$. Also, this result reveals the reason behind choosing unimodular scalars $q$ while defining doubly $q$-commuting operators as in Definition \ref{defn_dc_q}. An analogue of Theorem \ref{Putnam} holds trivially for $q$-commuting normal operators as a consequence of Proposition \ref{normal}.
	
\begin{cor}\label{normal_DC}
		Any $q$-commuting family of normal operators with $\|q\|=1$ is doubly $q$-commuting.
	\end{cor}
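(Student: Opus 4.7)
The statement is an immediate consequence of Proposition \ref{normal}, and I would present the proof as a single-step deduction plus a verification of notational consistency.

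The plan is as follows. Fix any two indices $\alpha, \beta \in \Lambda$. By hypothesis, both $T_\alpha$ and $T_\beta$ are normal, and they satisfy the $q$-commutation relation
\[
T_\alpha T_\beta = q_{\beta \alpha} T_\beta T_\alpha.
\]
Identifying $T_1 = T_\alpha$, $T_2 = T_\beta$, and $q = q_{\beta\alpha}$, the hypotheses of part (2) of Proposition \ref{normal} are satisfied since $T_2 = T_\beta$ is normal. Applying that part directly gives
\[
T_\alpha T_\beta^* = \overline{q}_{\beta \alpha} T_\beta^* T_\alpha,
\]
which is exactly the doubly $q$-commuting relation. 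As $\alpha, \beta \in \Lambda$ were arbitrary, the family $\{T_\alpha : \alpha \in \Lambda\}$ is doubly $q$-commuting.

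The only ingredient that warrants a brief remark is the interaction between the two normalizing conventions: our standing assumption $q_{\alpha\beta} = q_{\beta\alpha}^{-1}$ together with $|q_{\alpha\beta}| = 1$ ensures $\overline{q}_{\beta\alpha} = q_{\beta\alpha}^{-1} = q_{\alpha\beta}$, so the conclusion of Proposition \ref{normal}(2) is consistent with the form in which doubly $q$-commuting is stated. One could also observe that part (3) of Proposition \ref{normal} makes the assumption $|q| = 1$ automatic in any non-degenerate pair (those for which $T_\alpha^* T_\beta \neq 0$), so the hypothesis $|q| = 1$ on the family is essentially forced once one demands normality. I do not anticipate any obstacle; the entire proof is a one-line invocation of Proposition \ref{normal}(2) performed pairwise over $\Lambda$.
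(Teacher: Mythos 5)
Your proof is correct and follows exactly the paper's own argument: the paper likewise fixes a pair $\alpha,\beta\in\Lambda$ and invokes Proposition \ref{normal} (part (2), with $T_2=N_\beta$ normal) to obtain $N_\alpha N_\beta^*=\overline{q}_{\beta\alpha}N_\beta^*N_\alpha$. Your additional remark reconciling $\overline{q}_{\beta\alpha}$ with $q_{\alpha\beta}$ via $|q|=1$ is a harmless and welcome clarification, but the substance is identical.
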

	
 Indeed, we prove a stronger version of the above result. We prove that given a $q$-commuting family of normal operators $\{N_i : i \in J \}$ with $q=\{q_{ij} : i \ne j \in J \}$, one can choose a different set of scalars, say $\widetilde{q}=\{\widetilde{q}_{ij} : i \ne j \in J\}$, such that $|\widetilde{q}_{ij}|=1$ and $\{N_i : i \in J\}$ is doubly $\widetilde{q}$-commuting. 
	
	\begin{lem}\label{thm207}
		Any $q$-commuting family of normal operators is a doubly $\widetilde{q}$-commuting family.
	\end{lem}
	
	\begin{proof}
		Let $\{N_i : i \in J\}$ be a $q$-commuting family of normal operators. Then $N_iN_j=q_{ij}N_jN_i$ and $q_{ij} \ne 0$ for each $i \ne j$ in $J$. Take any $i \ne j$ in $J$. It follows from Corollary \ref{normal_DC} and Proposition \ref{normal} that   
		\begin{equation}\label{eqn101}
			N_iN_j^*=\overline{q}_{ij}N_j^*N_i
		\end{equation}
and $|q_{ij}|=1$ if $N_i^*N_j \ne 0$. Let $N_i^*N_j=0$. We show that $N_iN_j=0$. We have by (\ref{eqn101}) that
		\begin{equation*}
			\begin{split}
				(N_iN_j)(N_iN_j)^*=q_{ij}N_j(N_iN_j^*)N_i^*
				=q_{ij}\overline{q}_{ij}N_jN_j^*N_iN_i^* 				=|q_{ij}|^2N_j(N_i^*N_j)^*N_i^*
				=0.
			\end{split}
		\end{equation*}
		Thus, $N_iN_j=0$. We define 
		$
		\widetilde{q}_{ij}:= \left\{
		\begin{array}{ll}
			q_{ij}, & N_i^*N_j \ne 0 \\
			1, & N_i^*N_j = 0 \\
		\end{array} 
		\right. 
		$
		for every $i \ne j$ in $J$. Consequently, 
		\[
		N_iN_j=\widetilde{q}_{ij}N_jN_i, \quad N_i^*N_j=\widetilde{q}_{ij}^{-1}N_jN_i^* \quad \text{and} \quad |\widetilde{q}_{ij}|=1.
		\]
\end{proof}
	
The following examples from \cite{BPP, K.M.} show that the classes of $q$-commuting contractions with $\|q\|=1$ and doubly $q$-commuting contractions are non-empty. An interested reader is also referred to \cite{Maji} for more examples of $Q$-commuting contractions.

\begin{eg}
Given a Hilbert space $\mathcal{E}$ , let $\ell^2(\mathcal E)$ be the space of square summable sequences with enteries from $\mathcal E$. Consider the operators $T_1$ and $T_2$ on $\ell^2(\mathcal E)$ given by
\[
T_1(a_0, a_1, a_2, \dotsc) = (a_0, qa_1, q^2a_2, \dotsc) \quad \text{and} \quad  T_2(a_0, a_1, a_2, \dotsc) = (0, a_0, a_1, \dotsc) \quad (|q| = 1).
\]
It is easy to verify that $T_1$ and $T_2$ are isometries such that $T_1T_2 = qT_2T_1$. Moreover,
\[
T_1^*(a_0, a_1, a_2, \dotsc ) = (a_0, \overline{q}a_1, \overline{q}^2a_2, \dotsc)  \quad \text{for} \quad (a_0, a_1, a_2, \dotsc) \in \ell^2(\mathcal E ).
\]
It is a simple exercise to show that $T_1T_2^*=\overline{q}T_2^*T_1$ and so, $(T_1, T_2)$ is doubly $q$-commuting. \qed
\end{eg}

\begin{eg}
For $|q| = 1$ and non-zero scalars $a_1, a_2$ and $b$, let us define
\[
A=\begin{pmatrix}
a_1 & 0 \\
a_2 & qa_1
\end{pmatrix} \quad \text{and} \quad B=\begin{pmatrix}
0 & 0\\
b & 0
\end{pmatrix}.
\]
Some routine calculations show that $AB = qBA$ but $AB^*\ne \overline{q}B^*A$. Therefore, $(\|A\|^{-1}A, \|B\|^{-1}B)$ is a $q$-commuting pair of contractions which is not doubly $q$-commuting.	\qed
\end{eg}	
	

\section{Decomposition of any family of doubly $Q$-commuting contractions}\label{sec03}
	
		\vspace{0.2cm}
	
	\noindent In this Section, we present a canonical decomposition of any doubly $Q$-commuting family of contractions. The next theorem plays a central role to obtain the desired decomposition result. A reader is referred to \cite{Maji} for a proof to this. However, we give an alternative proof here.
	
	\begin{thm}\label{Qreducing}
		For a doubly $Q$-commuting pair of contractions $A$ and $B$ acting on a Hilbert space $\mathcal{H}$, if $B=B_1\oplus B_2$ is the canonical decomposition of $B$ with respect to the orthogonal decomposition $\mathcal{H}=\mathcal{H}_1 \oplus \mathcal{H}_2,$ then $\mathcal{H}_1, \mathcal{H}_2$ are reducing subspaces for $A$ and $Q$.
	\end{thm}
	
	\begin{proof}
To avoid the trivial cases, we assume that $\HS_1$ and $\HS_2$ both are non-zero subspaces. Since $Q$ is a normal operator which commutes with $A$ and $B$, it follows from Fuglede's Theorem \cite{Fuglede} that $Q$ doubly commutes with $A$ and $B$. We have by Theorem \ref{thm104} that
\[
\HS_1=\{h \in \HS : \|B^nh\|=\|h\|=\|{B^*}^nh\| \ \ \text{for} \ \ n=1, 2, \dotsc \} \quad \text{and} \quad \HS_2=\HS \ominus \HS_1.
\]
Let $h \in \HS_1$. Since $Q$ is a unitary that doubly commutes with $B$, we have that 
\[
\|B^nQh\|=\|QB^nh\|=\|B^nh\|=\|h\|=\|Qh\| \quad \text{and} \quad \|{B^*}^nQh\|=\|Q{B^*}^nh\|=\|{B^*}^nh\|=\|h\|=\|Qh\|
\]
for every non-negative integer $n$. Thus, $\HS_1$ and $\HS_2$ are reducing subspaces for $Q$. With respect to $\HS=\HS_1 \oplus \HS_2$, let us assume that
		\[
		A=\begin{bmatrix}
			A_{11} & A_{12} \\
			A_{21} & A_{22}		
		\end{bmatrix}, \quad B=\begin{bmatrix}
			B_1 & 0 \\
			0 & B_2		
		\end{bmatrix} \quad \text{and} \quad Q=\begin{bmatrix}
			Q_1 & 0\\
			0 & Q_2
		\end{bmatrix}. 
		\]
Since $AQ=QA$, we have that 
		\begin{align}
			A_{11}Q_1&=Q_1A_{11}, \quad A_{12}Q_2=Q_1A_{12}, \label{eqn601}\\
			A_{21}Q_1&=Q_2A_{21}, \quad A_{22}Q_2=Q_2A_{22}. \label{eqn602}
		\end{align}
		Moreover, $AQ^*=Q^*A$ gives that 
		\begin{align}
			A_{11}Q_1^*&=Q_1^*A_{11}, \quad A_{12}Q_2^*=Q_1^*A_{12}, \label{eqn603}\\
			A_{21}Q_1^*&=Q_2^*A_{21}, \quad A_{22}Q_2^*=Q_2^*A_{22}. \label{eqn604}
		\end{align}
		The condition that $AB=QBA$ implies that 
		\begin{align}
			A_{11}B_1 &=Q_1B_1A_{11}, \quad  A_{12}B_2=Q_1B_1A_{12}, \label{eqn605}  \\
			A_{21}B_1 &=Q_2B_2A_{21},  \quad A_{22}B_2=Q_2B_2A_{22} \label{eqn606}.
		\end{align}	
		Using the hypothesis $AB^*=Q^* B^*A$, we have that
		\begin{align}
			A_{11}B_1^* &=Q_1^* B_1^*A_{11}, \quad  A_{12}B_2^*=Q_1^* B_1^*A_{12}, \label{eqn607}  \\
			A_{21}B_1^* &=Q_2^* B_2^*A_{21},  \quad A_{22}B_2^*=Q_2^* B_2^*A_{22} \label{eqn608}.
		\end{align}	
		We show that $A_{21}=0=A_{12}$. Let us consider the closed linear subspace $\HS_{21}=\overline{A_{21}\HS_1}$ of $\HS_2$. It follows from (\ref{eqn606}) and (\ref{eqn604}) that
		\begin{equation}\label{eqn609}
			B_2A_{21}\HS_1=Q_2^* A_{21}B_1\HS_1=A_{21}Q_1^*B_1\HS_1 \quad \text{and} \quad B_2^*A_{21}\HS_1=Q_2A_{21}B_1^*\HS_1=A_{21}Q_1B_1^*\HS_1 
		\end{equation}
		respectively. Note that last equalities in (\ref{eqn609}) follow from (\ref{eqn604}) and (\ref{eqn602}) respectively. Since $\HS_1$ is a reducing subspace for $B_1$ and $Q_1$, it follows from (\ref{eqn609})  that 
		\[
		B_2A_{21}\HS_1 \subseteq A_{21}\HS_1 \quad \text{and} \quad B_2^*A_{21}\HS_1 \subseteq A_{21}\HS_1.
		\]
		Using continuity arguments, one can show that $B_2\HS_{21} \subseteq \HS_{21}$ and $B_2^*\HS_{21} \subseteq \HS_{21}$. Thus $\HS_{21}$ is a closed reducing subspace for $B_2$. Again using (\ref{eqn609}), for any $x \in \HS_1$, we have
		\begin{equation*}
			\begin{split}
				B_2^*B_2A_{21}x =B_2^* Q_2^*A_{21}B_1x =Q_2^*B_2^*A_{21}B_1x=Q_2^*Q_2A_{21}B_1^*B_1x = A_{21}x, 
			\end{split}
		\end{equation*}
		where in the last equality we used the fact that $B_1$ and $Q_1$ are unitaries on $\HS_1$. Similarly, it follows from (\ref{eqn609}) that
		\begin{equation*}
			\begin{split}
				B_2B_2^*A_{21}x=B_2Q_2A_{21}B_1^*x=Q_2B_2A_{21}B_1^*x=Q_2Q_2^*A_{21}B_1B_1^*x=A_{21}x
			\end{split}
		\end{equation*}
		for every $x \in \HS_1$. Using continuity, we have that $B_2^*B_2y=y=B_2B_2^*y$ for every $y \in \HS_{21}$. Thus, $\HS_{21}$ is a closed subspace of $\HS_2$ which reduces $B_2$ and $B_2|_{\HS_{21}}$ is a unitary. Since $B_2$ is a c.n.u. contraction, we must have $\HS_{21}=\{0\}$ and so, $A_{21}\HS_1=\{0\}$. Consequently, $A_{21}=0$. Now, we have 
		\[
		A=\begin{bmatrix}
			A_{11} & A_{12} \\
			0 & A_{22}		
		\end{bmatrix}
		\quad
		\text{and so}, 
		\quad
		A^*=\begin{bmatrix}
			A_{11}^* & 0 \\
			A_{12}^* & A_{22}^*		
		\end{bmatrix}
		\]
		with respect to $\HS=\HS_1 \oplus \HS_2$. We now prove that $A_{12}=0$ and we employ similar techniques as in the proof of $A_{21}=0$. By doubly $Q$-commutativity condition of $A$ and $B$, it follows that 
		\[
		A^*B^*=QB^*A^* \quad \text{and} \quad A^*B=Q^* BA^*.
		\]
		Using the above operator equations, a simple calculation gives that 
		\begin{equation}\label{eqn610}
			A_{12}^*B_1^*=Q_2B_2^*A_{12}^* \quad \text{and} \quad A_{12}^*B_1=Q_2^* B_2A_{12}^*.
		\end{equation}
		Much similar to the prior case, it follows from (\ref{eqn601}), (\ref{eqn603}), (\ref{eqn610}) and some continuity arguments that $\overline{A_{12}^*\HS_1}$ is a closed subspace of $\HS_2$ that reduces $B_2$ to a unitary. 
		Since $B_2$ is a c.n.u. contraction, it follows that $A_{12}^*\HS_1=\{0\}$ and so, $A_{12}=0$. Therefore, $\HS_1$ and $\HS_2$ are reducing subspaces for $A$ as well. The proof is now complete.
	\end{proof}

The decomposition results for any finite family of doubly $Q$-commuting contractions was proved in \cite{Maji}. We extend those results to any arbitrary family of doubly $Q$-commuting contractions in the light of combinatorial techniques developed in \cite{Pal}. Prior to this, we first establish certain terminologies as in \cite{Pal} to avoid any confusion here. 
	
\begin{defn}
		Let $T$ be a contraction acting on a Hilbert space $\mathcal{H}$.
		\begin{enumerate}
			\item  $T$ is an \textit{atom} if $T$ is either a unitary or a c.n.u. contraction.
			\item  $T$ is an \textit{atom of type $A_u$} if $T$ is a unitary. 
			\item $T$ is an \textit{atom of type $A_{cnu}$} if $T$ is a c.n.u. contraction.
			\item  $T$ is a \textit{non-atom} if $T$ is neither a unitary nor a c.n.u. contraction.
		\end{enumerate}
	\end{defn} 	
	
Let $\mathcal{T}=\{T_r \ : \ r \in R  \}$ be a doubly $Q$-commuting family of contractions acting on a Hilbert space $\HS$, where  $Q=\{Q_{rs} : Q_{rs}=Q_{sr}^*, r \ne s \in R\}$ is a commuting family of unitaries on $\HS$ such that $Q_{st}T_r=T_rQ_{st}$ for all $r, s, t$ in $R$ with $s \ne t$ and $R$ is an arbitrary set. We assume that there are infinitely many non-atoms in $\mathcal{T}$ to avoid triviality. We choose an arbitrary $r_1 \in R$. If $T_{r_1}^{(u)} \oplus T_{r_1}^{(c)}$ is the canonical decomposition of $T_{r_1}$ with respect to $\mathcal{H}=\HS^{(u)} \oplus \HS^{(c)}$ as in Theorem \ref{thm104}, then $\mathcal{H}^{(u)}, \mathcal{H}^{(c)}$ are joint reducing subspaces for $\mathcal{T}$ and $Q$ by Theorem \ref{Qreducing}. Thus, $\mathcal{T}$ is orthogonally decomposed into two tuples given by
	\[
	\mathcal{T}^{(u)}=\{T_s|_{\HS^{(u)}}  \ : s \in R \} \quad \text{and} \quad \mathcal{T}^{(c)}=\{T_s|_{\HS^{(c)}}  \ : s \in R \}.
	\]
	Clearly, the $r_1$-th entities ($T_{r_1}^{(u)}$ and $T_{r_1}^{(c)}$, respectively) are atoms. Let 
	\[
	\Delta_0=\{(\mathcal{T}, \HS)\} \quad \text{and} \quad \Delta_{r_1}=\{(\mathcal{T}^{(u)}, \HS^{(u)} ), (\mathcal{T}^{(c)}, \HS^{(c)} )\}.
	\]
	Next, we apply the axiom of choice and choose $r_2 \in R \setminus \{r_1\}$. Consider the $r_2$-th components of $\mathcal{T}^{(u)}$ and $\mathcal{T}^{(c)}$, i.e. $T_{r_2}|_{\HS^{(u)} }$ and $T_{r_2}|_{\HS^{(c)}}$ to perform the canonical decomposition of $\mathcal{T}^{(u)}$ and $\mathcal{T}^{(c)}$ respectively. We obtain $2^2=4$ new tuples acting on four orthogonal parts of $\HS$, say $\HS^{(uu)}, \HS^{(uc)}, \HS^{(cu)} , \HS^{(cc)}$, where $\HS^{(u)} =\HS^{(uu)}  \oplus \HS^{(uc)} $ and $\HS^{(c)} =\HS^{(cu)}  \oplus \HS^{(cc)} $. Let us denote by 
	\[
	\mathcal{T}^{(uu)}=\{T_s|_{\HS^{(uu)} }  \ : s \in R \} \quad \text{and} \quad \mathcal{T}^{(uc)}=\{T_s|_{\HS^{(uc)}}  \ : s \in R \}
	\]
	and 
	\[
	\mathcal{T}^{(cu)}=\{T_s|_{\HS^{(cu)} }  \ : s \in R \} \quad \text{and} \quad \mathcal{T}^{(cc)}=\{T_s|_{\HS^{(cc)}}  \ : s \in R \}
	\]
	respectively. Let
	\[
	\Delta_{r_2}=\left\{(\mathcal{T}^{(uu)}, \HS^{(uu)}), \ (\mathcal{T}^{(uc)}, \HS^{(uc)}), \ (\mathcal{T}^{(cu)}, \HS^{(cu)}), \ (\mathcal{T}^{(cc)}, \HS^{(cc)})\right\}.
	\]
	One can show that $\HS^{(uu)} , \HS^{(uc)} , \HS^{(cu)} $ and $\HS^{(cc)}$ are closed reducing subspaces for $\mathcal{T}$ and $Q$ such that
	\begin{equation}\label{eqn307}
		\left.\begin{split}
			& (a) \quad \text{$T_{r_1}|_{\HS^{(uu)} }$ and $T_{r_2}|_{\HS^{(uu)} }$ are unitaries};\\
			& (b) \quad \text{$T_{r_1}|_{\HS^{(uc)} }$ is a unitary and $T_{r_2}|_{\HS^{(uc)} }$ is a c.n.u. contraction};\\
			& (c) \quad \text{$T_{r_1}|_{\HS^{(cu)} }$ is a c.n.u. contraction and $T_{r_2}|_{\HS^{(cu)} }$ is a unitary};\\
			& (d) \quad \text{$T_{r_1}|_{\HS^{(cc)} }$ and $T_{r_2}|_{\HS^{(cc)} }$ are c.n.u. contractions}.\\
		\end{split}\right\}
	\end{equation}
	Let if possible $\mathcal{L}^{(uu)}$ be a closed reducing subspace for $T_{r_1}$ and $T_{r_2}$ such that $T_{r_1}|_{\mathcal{L}^{(uu)}}$ and $T_{r_2}|_{\mathcal{L}^{(uu)}}$ are unitaries.  It follows from Theorem \ref{thm104} that $\HS^{(u)} $ is the largest closed subspace of $\HS$ that reduces $T_{r_1}$ to a unitary and so, $\mathcal{L}^{(uu)} \subseteq \HS^{(u)} $. Again by Theorem \ref{thm104}, we get that $\HS^{(uu)} $ is the largest among all subspaces of $\HS^{(u)} $ which reduces $T_{r_2}$ to a unitary. Consequently, $\mathcal{L}^{(uu)} \subseteq \HS^{(uu)} $. Similary, one can prove that $\HS^{(uc)}, \HS^{(cu)}$ and $\HS^{(cc)}$ are maximal subspaces of $\HS$ in the sense of (\ref{eqn307}). After applying the axiom of choice $n$-times, we obtain $2^n$ operator tuples (corresponding to the $2^n$ orthogonal splits of $\HS$) extending the decomposition for a pair of doubly $Q$-commuting contractions as in (\ref{eqn307}). En route, we also obtain the families $\{\Delta_0, \Delta_{r_k} : 1 \leq k \leq n \}$. Now, we have the following decomposition theorem when $R$ is a finite family of doubly $Q$-commuting contractions.
		
	\begin{thm}\label{thm303F} 
		Let $(T_{1}, \ldots, T_{n})$ be a doubly $Q$-commuting tuple of contractions acting on a Hilbert space $\HS$. Then there corresponds a decomposition of $\mathcal{H}$ into an orthogonal sum of $2^{n}$ joint reducing subspaces $\mathcal{H}_1, \ldots, \mathcal{H}_{2^{n}}$ for $T_{1}, \dotsc, T_{n}$ and $Q$ such that the following hold.
		\begin{enumerate}
			\item  $T_{i}|_{\mathcal{H}_j}$ is either a unitary or a c.n.u. contraction for $1 \leq i \leq n$ and  $ 1 \leq j \leq 2^{n}$;
			
			\item  $\mathcal{H}_1$ is the largest closed joint reducing subspace of $\HS$ on which each $T_{j}$ acts as a unitary;
			
			\item  $\mathcal{H}_{2^{n}}$ is the largest closed joint reducing subspace of $\HS$ on which each $T_{j}$ is a c.n.u. contraction;
			
			\item If $\Theta_n$ is the set of all functions $\theta: \{1, \dotsc, n\} \to \{A_u, A_{cnu}\}$, then for every $\theta \in \Theta_n$, there corresponds a unique subspace $\HS_{t(\theta)} (1 \leq t(\theta) \leq 2^n)$ such that the $k$-th component of $(T_1|_{\HS_{t(\theta)}}, \dotsc, T_n|_{\HS_{t(\theta)}})$ is of the type $\theta(k)$ for $1 \leq k \leq n$ and $\HS_{t(\theta)}$ is the largest such joint reducing subspace for $T_1, \dotsc, T_n$ and $Q$.
		\end{enumerate}	
	\end{thm} 
	
	\begin{proof}
		
Follows from the above discussion with an application of mathematical induction on $n$. However, a proof to this could be found 	in \cite{Maji}.
	\end{proof}
	
\subsection{The countable infinite case.} 
In this Subsection, we imitate the definitions, terminologies and a few results from the first named author's work \cite{Pal}. We follow the notations in the discussion above Theorem \ref{thm303F}. Let us assume that $R$ is a countable set and so, $R=\{r_n \ : \ n \in \N\}$. One can follow the natural order as $1 \to 2 \to 3 \to \dotsc$ which induces an order $r_1 \to r_2 \to r_3 \to \ldots$ on $R$. Let 
	\[
	\Delta_\N=\Delta_0 \cup \underset{n \in \N}{\bigcup}\Delta_{r_n}.
	\]
	For any two members $\left(\mathcal{T}_\alpha, \mathcal{H}_{\alpha}\right),\left(\mathcal{T}_\beta, \mathcal{H}_{\beta}\right)$ in $\Delta_\N$, we say that  $\left(\mathcal{T}_\alpha, \mathcal{H}_{\alpha}\right) \leq \left(\mathcal{T}_\beta, \mathcal{H}_{\beta}\right)$ if and only if $\mathcal{H}_{\beta} \subseteq \mathcal{H}_{\alpha}, \mathcal{H}_{\beta}$ is a joint reducing subspace for $\mathcal{T}_\alpha, Q$ and $\mathcal{T}_\beta=\mathcal{T}_\alpha|_{{\mathcal{H}_{\beta}}}$. Thus,  we obtain a partially ordered set $(\Delta_\N, \leq)$. Evidently, $(\mathcal{T}, \HS) \leq (\mathcal{T}_\alpha, \HS_{\alpha})$ for any $(\mathcal{T}_\alpha, \HS_{\alpha}) \in \Delta_\N$.
	
	\begin{defn}
		A proper subset $\Delta_*$ of $\Delta_{\mathbb{N}}$ is called a \textit{maximal totally ordered set} if $(\Delta_*, \leq)$ is a totally ordered subset of $\left(\Delta_\mathbb{N}, \leq \right)$ and for any $\left(\mathcal{T}_\alpha, \mathcal{H}_{\alpha}\right) \in \Delta_{\mathbb{N}} \backslash \Delta_*$, the set
		$
		\Delta_* \cup \{(\mathcal{T}_\alpha, \mathcal{H}_{\alpha})\}
		$
		is not a totally ordered subset of $\Delta_{\mathbb{N}}$.
\end{defn}

A maximal totally ordered set can be written as an increasing chain of elements in $\Delta_\N$. Note that only one element from each $\Delta_{r_n}$ appears in a maximal totally ordered set for $n \in \N$. Using the above notations, one can verify that the following are examples of maximal totally ordered sets in $\Delta_\N$ :  
\[
\left\{(\mathcal{T}, \HS), (\mathcal{T}^{(u)}, \HS^{(u)}), (\mathcal{T}^{(uu)}, \HS^{(uu)}), \dotsc \right\}, \, \,
\left\{(\mathcal{T}, \HS), (\mathcal{T}^{(c)}, \HS^{(c)}), (\mathcal{T}^{(cu)}, \HS^{(cu)}), \dotsc \right\}.
\]	
Given two maximal totally ordered sets $\Delta_{*}^{(1)}$ and $\Delta_{*}^{(2)}$ in $\Delta_\N$ in the form of increasing chains
\[
\Delta_{*}^{(1)}=\left\{(\mathcal{T}_{\alpha_1}^{(1)}, \HS_{\alpha_1}^{(1)}), \ (\mathcal{T}_{\alpha_2}^{(1)}, \HS_{\alpha_2}^{(1)}), \ (\mathcal{T}_{\alpha_3}^{(1)}, \HS_{\alpha_3}^{(1)}), \dotsc  \right\}
 \]
 \[
 \Delta_{*}^{(2)}=\left\{(\mathcal{T}_{\alpha_1}^{(2)}, \HS_{\alpha_1}^{(2)}), \ (\mathcal{T}_{\alpha_2}^{(2)}, \HS_{\alpha_2}^{(2)}), \ (\mathcal{T}_{\alpha_3}^{(2)}, \HS_{\alpha_3}^{(2)}), \dotsc  \right\},
\]
we say that $\Delta_{*}^{(1)}$ and $\Delta_{*}^{(2)}$ are \textit{different} if there is some $m \in \N$ such that 
$
(\mathcal{T}_{\alpha_m}^{(1)}, \HS_{\alpha_m}^{(1)}) \ne (\mathcal{T}_{\alpha_m}^{(2)}, \HS_{\alpha_m}^{(2)})$. Thus, if $\Delta_{*}^{(1)}$ and $\Delta_{*}^{(2)}$ are different, then it follows that $(\mathcal{T}_{\alpha_k}^{(1)}, \HS_{\alpha_k}^{(1)}) \ne (\mathcal{T}_{\alpha_k}^{(2)}, \HS_{\alpha_k}^{(2)})$ for all $k \geq m$.

\begin{defn}
	Let $\Delta_{*}=\left\{(\mathcal{T}_{\alpha_1}, \HS_{\alpha_1}), \ (\mathcal{T}_{\alpha_2}, \HS_{\alpha_2}), \ (\mathcal{T}_{\alpha_3}, \HS_{\alpha_3}), \dotsc  \right\}$ be a maximal totally ordered set with elements written in increasing order. With the notations 
	$
	\HS_*=\overset{\infty}{\underset{n=1}{\bigcap}}\HS_{\alpha_n}$ and $\mathcal{T}_*=\mathcal{T}|_{\HS_*}$, the pair $(\mathcal{T}_*, \HS_*)$ is called the \textit{maximal element} for $\Delta_*$.
	\end{defn}
	
We recall from \cite{Pal} a few results about the maximal element of a maximal totally ordered set.

\begin{lem}[\cite{Pal}, Lemmas 6.5 \& 6.6]\label{lem_max}
	The operator tuple in a maximal element consists of atoms only. The maximal elements of two different maximal totally ordered sets are different.
\end{lem} 	
	
Let $\Delta_\infty$ be the set of all maximal elements for the maximal totally ordered sets in $\Delta_\N$. It follows from Lemma \ref{lem_max} that any element in $\Delta_\infty$ contains atoms only. 

\begin{lem}[\cite{Pal}, Lemmas 6.7 \& 6.9]\label{lem_dinfty}
For every sequence $\alpha: \N \to \{A_u, A_{cnu}\}$, there exists a unique maximal element $(\mathcal{T}_\alpha, \HS_\alpha)$ in $\Delta_\infty$ such that the $k$-th component of $\mathcal{T}_\alpha$ is of type $\alpha(k)$ for every $k \in \N$. Moreover, the cardinality of $\Delta_\infty$ is $2^{\aleph_0}$, where $\aleph_0$ is the cardinality of $\N$.	
\end{lem}

\noindent \textbf{Notation.} Let $S(\N)$ be the collection of all sequences $\alpha: \mathbb{N} \rightarrow\{A_u, A_{cnu}\}$. We represent $\Delta_\infty$ as
\[
\Delta_\infty=\left\{ (\mathcal{T}_\alpha, \HS_\alpha) : \alpha \in S(\N) \right\}.
\] 
Following the proof of Lemmas 6.11 and 6.12 in \cite{Pal}, we arrive at the subsequent result.

\begin{lem}[\cite{Pal}, Lemmas 6.11 \& 6.12]\label{lem_final}
For the operator tuple $\mathcal{T}$ on $\HS$, we have that
	\[
	\mathcal{H}=\bigoplus_{\alpha \in S(\N)} \mathcal{H}_\alpha \quad \text{and} \quad 	\mathcal{T}=\bigoplus_{\alpha \in S(\N)} \mathcal{T}_\alpha.
	\]
Given $n \in \N$ and $(\mathcal{T}_\alpha, \HS_\alpha) \in \Delta_{r_n}$, there are infinitely many maximal totally ordered sets $\Delta_*$ in $\Delta_\N$ such that $(\mathcal{T}_\alpha, \HS_\alpha) \in \Delta_*$.	
\end{lem}

All the above mentioned results from \cite{Pal} are set-theoretic in nature and hence, holds in the setting of doubly $Q$-commuting contractions too. By combining Lemmas \ref{lem_max}, \ref{lem_dinfty} and \ref{lem_final}, we arrive at the following generalization of Theorem \ref{thm303F} in the countably infinite setting.

	\begin{thm}\label{thm_count}
		Let $\mathcal{T}=\left(T_{n}\right)_{n=1}^{\infty}$ be a doubly $Q$-commuting family of contractions acting on a Hilbert space $\mathcal{H}$ which consists of infinitely many non-atoms and let $S(\N)$ be the set of all sequences $\alpha: \mathbb{N} \rightarrow\{A_u, A_{cnu}\}$. Then $\mathcal{H}$ admits an orthogonal decomposition $\mathcal{H}=\bigoplus_{\alpha \in S(\N)} \mathcal{H}_\alpha$ such that each $\mathcal{H}_\alpha$ is a joint reducing subspace for $\mathcal{T}, Q$ and the following assertions hold.
		\begin{enumerate}
			
\item  For each $\alpha \in S(\N)$, the components of the operator tuple $\left(\left.T_{n}\right|_{\mathcal{H}_\alpha}\right)_{n=1}^{\infty}$ are all atoms.

\item  The cardinality of $\Delta_{\infty}=\left(\mathcal{T}_\alpha, \mathcal{H}_\alpha\right)_{\alpha \in S(\N)}$ is $2^{\aleph_0}$, where $\mathcal{T}_\alpha=\left(T_{n}|_{\mathcal{H}_\alpha} \right)_{n=1}^{\infty}$ for $\alpha \in S(\N)$.

\item  There is a unique element, say $\left(\mathcal{T}_1, \mathcal{H}_1\right)$ in $\Delta_{\infty}$ such that the components of $\mathcal{T}_1$ are all atoms of type $A_u$ which is determined by the constant sequence $\alpha_1: \mathbb{N} \rightarrow\{A_u, A_{cnu} \}$ defined by $\alpha_1(n)=A_u$ for all $n \in \N$.

\item  There is a unique element, say $\left(\mathcal{T}_{2^{\aleph_0}}, \mathcal{H}_{2^{\aleph_0}}\right)$ in $\Delta_{\infty}$ such that the components of $\mathcal{T}_{2^{\aleph_0}}$ are all atoms of type $A_{cnu}$, and this is determined by the constant sequence $\alpha_{2^{\aleph_0}}: \mathbb{N} \rightarrow\{A_u, A_{cnu}\}$ defined by $\alpha_{2^{\aleph_0}}(n)=A_{cnu}$ for all $n \in \mathbb{N}$.
\end{enumerate}
One or more members of $\left\{\mathcal{H}_\alpha: \alpha \in S(\N)\right\}$ may coincide with the trivial subspace $\{0\}$.
	\end{thm}

	
	\subsection{The general case.} Let $\mathcal{T}=\{T_r: r \in R \}$ be a doubly $Q$-commuting family of  contractions acting on a Hilbert space $\mathcal{H}$. Here $R$ is any arbitrary set. To avoid triviality, we again assume that $\mathcal{T}$ consists of infinitely many non-atoms. As in the previous Subsection, we obtain the families 
	\[
\{(\mathcal{T}, \HS)\}
\]
\[
\{(\mathcal{T}^{(u)}, \HS^{(u)} ), (\mathcal{T}^{(c)}, \HS^{(c)} )\}
\]
\[ \{(\mathcal{T}^{(uu)}, \HS^{(uu)}), \ (\mathcal{T}^{(uc)}, \HS^{(uc)}), \ (\mathcal{T}^{(cu)}, \HS^{(cu)}), \ (\mathcal{T}^{(cc)}, \HS^{(cc)})\}.
	\]
Using axiom of choice and Theorem \ref{thm104}, we continue the orthogonal decompositions this way. Finally, we obtain the limiting set $\Delta_\infty$ which must have cardinality $2^{|R|}$, where $|R|$ is the cardinality of $R$. This holds, because here we consider the set of all functions $\phi: R \to \{A_u, A_{cnu}\}$ which has cardinality $2^{|R|}$. One can define ``$\leq$" similarly as in the countable infinite case and establish analogues of Lemmas \ref{lem_max}, \ref{lem_dinfty} and \ref{lem_final}. Consequently, we have the following analogue of Theorems \ref{thm303F} and \ref{thm_count} for an arbitrary doubly $Q$-commuting family of contractions. This is the main result of this Section.	
	\begin{thm}\label{thm303}
		Let $\mathcal{T}=\{T_r: r \in R \}$ be a doubly $Q$-commuting family of  contractions acting on a Hilbert space $\mathcal{H}$ that consists of infinitely many non-atoms. If $S(R)$ is the set of all functions $\phi: R \rightarrow\{A_u, A_{cnu}\}$ and if
		$
		\Delta_\infty=\{(\mathcal{T}_\phi, \mathcal{H}_\phi): \phi\in S(R)\}
		$
		with $\mathcal{T}_\phi=\{T_r|_{\mathcal{H}_\phi}: r \in R\}$, then the following statements hold.
		
		\begin{enumerate}
			\item  $\mathcal{H}$ admits an orthogonal decomposition $\mathcal{H}=\bigoplus_{\phi \in S(R)} \mathcal{H}_\phi$ such that each $\mathcal{H}_\phi$ is a joint reducing subspace for $\mathcal{T}$ and $Q$.
			
\item  For each $\phi \in S(R)$, the members of the operator family $\{T_r|_{\mathcal{H}_\phi}: r \in R \}$ are all atoms.

\item  There is exactly one element, say $\left(\mathcal{T}_1, \mathcal{H}_1\right)$ in $\Delta_{\infty}$ such that the members of $\mathcal{T}_1$ are all atoms of type $A_u$ and this is determined by the constant function $\phi_1: R \rightarrow\{A_u, A_{cnu}\}$ defined by $\phi_1(r)=A_u$ for all $r \in R$.

			\item  There is exactly one element, say $\left(\mathcal{T}_{2^{|R|}}, \mathcal{H}_{2^{|R|}}\right)$ in $\Delta_\infty$ such that the members of $\mathcal{T}_{2^{|R|}}$ are all atoms of type $A_{cnu}$, and this is determined by the constant function $\phi_{2^{|R|}}: R \rightarrow\{A_u, A_{cnu}\}$ defined by $\phi_{2^{|R|}}(r)=A_{cnu}$ for all $r \in R$.
		
			\item  The cardinality of the set $\Delta_{\infty}$ is $2^{|R|}$, where $|R|$ is the cardinality of the set $R$.
			
		\end{enumerate}
		One or more members of $\left\{\mathcal{H}_\phi: \phi \in S(R)\right\}$ may coincide with the trivial subspace $\{0\}$.
	\end{thm}


	\subsection{Wold decomposition of doubly $Q$-commuting isometries.} We obtain the Wold-type decomposition result for doubly $Q$-commuting isometries as a special case of Theorem \ref{thm303}. To explain it more clearly, we need the following definition. 
	\begin{defn}
		A Hilbert space contraction $V$ is said be of \textit{type $A_{shift}$} if it is a unilateral shift. 
	\end{defn}
Note that if $V$ is an isometry which is a c.n.u. contraction as well, then $V$ is a pure isometry and hence a unilateral shift. Thus, every isometry of type $A_{cnu}$ is of type $A_{shift}$. We now present the following result whose proof is a consequence of Theorem \ref{thm303}.	
	
\begin{thm}\label{Wold general case}
		Let $\mathcal{V}=\{V_r: r \in R \}$ be a doubly $Q$-commuting family of isometries acting on a Hilbert space $\mathcal{H}$ which consists of infinitely many isometries that are neither unitaries nor pure isometries. If $V(R)$ is the set of all functions $\mu: R \rightarrow\{A_u, A_{shift}\}$ and 
		\[
		\Delta_\infty=\{(\mathcal{V}_\mu, \mathcal{H}_\mu): \mu \in V(R)\},
		\]
		where $\mathcal{V}_\mu=\{V_r|_{\mathcal{H}_\mu}: r \in R\},$ then we have the following assertions.
		\begin{enumerate}
			\item  $\mathcal{H}$ admits an orthogonal decomposition $\mathcal{H}=\bigoplus_{\mu \in V(R)} \mathcal{H}_\mu$ such that each $\mathcal{H}_\mu$ is a joint reducing subspace for $\mathcal{V}$ and $Q$.

	\item  For each $\mu \in V(R)$, the members of the operator family $\{V_r|_{\mathcal{H}_\mu}: r \in R \}$ are either unitaries or pure isometries.
			
			\item  There is exactly one element, say $\left(\mathcal{V}_1, \mathcal{H}_1\right)$ in $\Delta_{\infty}$ such that the members of $\mathcal{V}_1$ are all unitaries, and this is determined by the constant function $\mu_1: R \rightarrow\{A_u, A_{shift}\}$ defined by $\mu_1(r)=A_u$ for all $r \in R$.
	
\item  There is exactly one element, say $\left(\mathcal{V}_{2^{|R|}}, \mathcal{H}_{2^{|R|}}\right)$ in $\Delta_{\infty}$ such that the members of $\mathcal{V}_{2^{|R|}}$ are all pure isometries, and this is determined by the constant function $\mu_{2^{|R|}}: R \rightarrow\{A_u, A_{shift}\}$ defined by $\mu_{2^{|R|}}(r)=A_{shift}$ for all $r \in R$.
	
			\item  The cardinality of the set $\Delta_{\infty}$ is $2^{|R|}$, where $|R|$ is the cardinality of the set $R$.
			
		\end{enumerate}
		One or more members of $\left\{\mathcal{H}_\mu: \mu \in V(R)\right\}$ may coincide with the trivial subspace $\{0\}$.
	\end{thm}
	
	\vspace{0.05cm}

\section{Decomposition of any family of doubly $Q$-commuting c.n.u. contractions}\label{sec04}
	
		\vspace{0.2cm}
	
	\noindent We learn from Theorem \ref{thm303F} that given a tuple of doubly $Q$-commuting contractions $(T_1, \dotsc, T_n)$ acting on a Hilbert space $\HS$, there is a subspace $\HS_{2^n}$ of $\HS$ that is joint reducing for $T_1, \dotsc, T_n$ and $Q$ such that each $T_i|_{\HS_{2^n}}$ is a c.n.u. contraction. In this Section, we further decompose an $n$-tuple of doubly $Q$-commuting c.n.u. contractions into $2^n$ orthogonal parts. Then, we extend this result to any family of doubly $Q$-commuting c.n.u. contractions. The following result plays a central role here.
	
	\begin{thm}\label{Qreducing cnu}
		Let $(A, B)$ be a doubly $Q$-commuting pair of c.n.u. contractions on a Hilbert space $\mathcal{H}$. If $B=B_1\oplus B_2$ is the Levan-type decomposition of $B$  with respect to the orthogonal decomposition $\mathcal{H}=\mathcal{H}_1 \oplus \mathcal{H}_2$ as in Theorem \ref{thm103}, then $\mathcal{H}_1, \mathcal{H}_2$ are reducing subspaces for $A$ and $Q$.
	\end{thm}
	
\begin{proof}
It suffices to show that $\mathcal{H}_1$ reduces $A$ and $Q$. First, we show that $\HS_1$ is invariant under both $Q$ and $Q^*$. Since $Q$ commutes with $A$ and $B$, it follows from Fuglede's theorem \cite{Fuglede} that $Q$ doubly commutes with both $A$ and $B$.  Following the proof of Theorem \ref{thm103} from \cite{Levan} (which is Theorem 1 in \cite{Levan}), we have that $\HS_1$ is the span closure of all closed subspaces of $\HS$ that reduces $B$ to an isometry. Let $\mathcal{L}=\overline{Q\HS_1}$ and $\mathcal{L}_*=\overline{Q^*\HS_1}$. Since $\HS_1$ is a reducing subspace for $B$, we have 
		\[
		BQ\HS_1=QB\HS_1 \subseteq Q\HS_1 \quad \text{and} \quad B^*Q\HS_1=QB^*\HS_1 \subseteq Q\HS_1. 
		\] 
		Using continuity, one can show that $\mathcal{L}$ is a reducing subspace for $B$. For any $x \in \HS_1$, we have $B^*Bx=x$ and so, $B^*BQx=QB^*Bx=Qx$. Again by continuity arguments, it follows that $\mathcal{L}$ is a closed subspace of $\HS$ which reduces $B$ to an isometry and thus, $\mathcal{L} \subseteq \HS_1$. Therefore, $Q\HS_1 \subseteq \HS_1$. Using the same arguments for $\mathcal{L}_*$, it is easy to see that $\mathcal{L}_* \subseteq \HS_1$ and so, $Q^*\HS_1 \subseteq \HS_1$. Consequently, $\HS_1$ is a reducing subspace for $Q$. 
		
		\smallskip
		
	Next, we prove that $\HS_1$ reduces $A$. To see this, first note that $BA=Q^*AB$ and $QAB^*=B^*A$ since $A$ and $B$ doubly $Q$-commute. Let $x \in \HS_1$. Since $\HS_1$ is a joint reducing subspace for $B, Q$ and $(A, Q)$ is a doubly commuting pair, we have that
		\begin{equation*}
			BAx=Q^*ABx=AQ^*Bx \in A\mathcal{H}_1,  \quad  B^*Ax=QAB^*x=AQB^*x \in A\mathcal{H}_1,
		\end{equation*}	
		and that
		\begin{equation*}
			B^*BAx=B^*Q^*ABx=Q^*B^*ABx=Q^*QAB^*Bx=Ax.
		\end{equation*}	
The last equality in the above equation follows from the fact that $B|_{\mathcal{H}_1}$ is an isometry. Using continuity argument, we have that $\mathcal{H}_0=\overline{A\mathcal{H}_1}$ is a closed subspace of $\mathcal{H}$ which reduces $B$ such that $B|_{\mathcal{H}_0}$ is isometry. Therefore, it follows from the definition of $\mathcal{H}_1$ that $A\mathcal{H}_1 \subseteq  \mathcal{H}_1$. Similarly, one can prove that $A^*\mathcal{H}_1 \subseteq \mathcal{H}_1$ and this completes the proof.  
	\end{proof}
	
We now prove a Levan-type decomposition result for any doubly $Q$-commuting tuple of c.n.u. contractions. For the ease of computations, we use the following terminologies that were introduced in \cite{Pal}.
	
	\begin{defn}
		Let $T$ be a contraction acting on a Hilbert space $\mathcal{H}$.
		\begin{enumerate}
			\item[(i)]  $T$ is a \textit{fundamental c.n.u. contraction} if $T$ is either a pure isometry or a c.n.i. contraction.
			\item[(ii)]  $T$ is a \textit{c.n.u. contraction of type $A_{shift}$} if $T$ is a pure isometry. 
			\item[(iii)] $T$ is a \textit{c.n.u. contraction of type $A_{cni}$} if $T$ is a c.n.i. contraction.
			\item[(iv)]  $T$ is a \textit{non-fundamental c.n.u. contraction} if $T$ is a c.n.u. contraction which is neither a pure isometry nor a c.n.i. contraction.
		\end{enumerate}
	\end{defn} 
	
\begin{thm}\label{Levan tuple}
		Let $(T_1, \ldots, T_n)$ be a doubly $Q$-commuting tuple of c.n.u. contractions acting on a Hilbert space $\mathcal{H}$. Then there exists a decomposition of $\mathcal{H}$ into an orthogonal sum of $2^n$ joint reducing subspaces $\mathcal{H}_1, \ldots, \mathcal{H}_{2^n}$ for $T_1, \dotsc, T_n, Q$ such that the following hold.
		\begin{enumerate}
			\item  $T_i|_{\mathcal{H}_j}$ is a fundamental c.n.u. contraction for  $1 \leq i\leq  n$ and $1\leq j \leq 2^n$.

			\item  $\mathcal{H}_1$ is the largest closed subspace of $\HS$ reducing each $T_i$ to a pure isometry.

\item  $\mathcal{H}_{2^n}$ is the largest closed subspace of $\HS$ reducing each $T_i$ to a c.n.i. contraction.
			
			\item If $G_n$ is the set of all functions $g: \{1, \dotsc, n\} \to \{A_{shift}, A_{cni}\}$, then for every $g \in G_n$, there corresponds a unique subspace $\HS_{t(g)} (1 \leq t(g) \leq 2^n)$ such that the $k$-th component of $(T_1|_{\HS_{t(g)}}, \dotsc, T_n|_{\HS_{t(g)}})$ is of the type $g(k)$ for $1 \leq k \leq n$ and $\HS_{t(g)}$ is the largest such joint reducing subspace for $T_1, \dotsc, T_n, Q$.
			
		\end{enumerate}	
		One or more members of $\{\mathcal{H}_j: 1 \leq j \leq 2^n \}$ may coincide with the trivial subspace $\{0\}$.
	\end{thm}
	
		\begin{proof}
We start with a tuple of c.n.u. contractions $(T_1, \dotsc, T_n)$ and a family of commuting unitaries $Q=\{Q_{ij} \ : \ 1 \leq i < j \leq n\}$ on $\HS$  such that 
		\[
		T_iT_j=Q_{ij}T_jT_i, \quad T_iT_j^*=Q_{ij}^*T_j^*T_i, \quad Q_{ji}=Q_{ij}^* \quad  \text{and} \quad T_kQ_{ij}=Q_{ij}T_k
		\]
		for all $1 \leq i < j \leq n$ and $1 \leq k \leq n$. Let $\mathcal{H}=\HS_p \oplus \HS_c$ be the Levan-type decomposition for $T_1$ as in Theorem \ref{thm103}. By Fuglede's theorem \cite{Fuglede}, each $Q_{ij}$ doubly commutes with $T_1$.  It follows from Theorem \ref{Qreducing cnu} that $\mathcal{H}_u, \mathcal{H}_c$ are joint reducing subspaces for $T_2, \dotsc, T_n$ and $Q$ as well. By performing the Levan-type decomposition of $T_{2}|_{\mathcal{H}_u}$ and $T_{2}|_{\mathcal{H}_c}$, we get $2^2$ orthogonal splits of $\HS$. Indeed, a repeated application of Theorems  \ref{thm103} \& \ref{Qreducing cnu} provides orthogonal decompositions $\HS_p=\HS_{pp} \oplus \HS_{pc}$ and $\HS_c=\HS_{cp} \oplus \HS_{cc}$ such that $\HS_{pp}, \HS_{pc}, \HS_{cp}, \HS_{cc}$ are closed reducing subspaces for $T_1, \dotsc T_n$ and $Q$. Also, $T_{1}|_{\HS_{pp}}, T_{1}|_{\HS_{pc}}, T_{2}|_{\HS_{pp}}, T_{2}|_{\HS_{cp}}$ are pure isometries and $T_{1}|_{\HS_{cp}}, T_{1}|_{\HS_{cc}}, T_{2}|_{\HS_{pc}}, T_{2}|_{\HS_{cc}}$ are c.n.i. contractions. Again using the double commutativity of each $Q_{ij}$ with $T_2$, it follows from Theorem \ref{Qreducing cnu} that $\HS_u$ and $\HS_c$ are joint reducing subspaces for $Q$. We move one step ahead and perform the canonical decomposition on $T_3|_{\HS_{pp}}, T_3|_{\HS_{pc}}, T_3|_{\HS_{cp}} T_3|_{\HS_{cc}}$ to obtain the orthogonal decompositions 
		\[
		\HS_{pp}=\HS_{ppp}\oplus \HS_{ppc}, \quad  \HS_{pc}=\HS_{pcp}\oplus \HS_{pcc}, \quad  \HS_{cp}=\HS_{cpp}\oplus \HS_{cpc}, \quad  \HS_{cc}=\HS_{ccp}\oplus \HS_{ccc}
		\] 
		such that each of the spaces in the above decomposition reduce $T_1, \dotsc , T_n$ and $Q$ by virtue of Theorem \ref{Qreducing cnu}. Moreover, we have 
		\[ 
		T_k|_{\mathcal{H}_{i_1i_2i_3}}= \left\{
		\begin{array}{ll}
			\mbox{pure isometry}, & i_k=p \\
			\mbox{c.n.i. contraction}, & i_k=c\\
		\end{array} 
		\right. 
		\]
		for $1 \leq k \leq 3$. Continuing this way, after finitely many steps we obtain a decomposition of $\mathcal{H}$ into an orthogonal sum of $2^{n}$ joint reducing subspaces $\mathcal{H}_1, \ldots, \mathcal{H}_{2^{n}}$ of $T_1, \dotsc, T_{n}, Q$ such that the desired conclusion follows.
	\end{proof}

Following Example 3.5 in \cite{Maji}, we present examples of doubly $q$-commuting pairs of c.n.u. and c.n.i. contractions.

\begin{eg}\label{eg404}
For a Hilbert space $\mathcal{E}$, the vector-valued Hardy space of the bidisk $\mathbb D^2$ is defined as
\[
H^2_{\mathcal{E}}(\D^2)=\left\{f(z_1, z_2)=\overset{\infty}{\underset{i, j=0}{\sum}} f_{ij} \ z_1^iz_2^j \ : \ z_1, z_2 \in \D, f_{ij} \in \mathcal{E}, \overset{\infty}{\underset{i, j=0}{\sum}}\|f_{ij}\|_{\mathcal{E}}^2 < \infty \right\}
\]
with inner product
$
\langle f(z_1, z_2), g(z_1, z_2) \rangle =\overset{\infty}{\underset{i, j=0}{\sum}}\langle f_{ij},{g}_{ij}\rangle_{\mathcal{E}}.
$
Let $q$ be a unimodular scalar. Define $T_1, T_2: H^2_{\mathcal{E}}(\D^2) \to H^2_{\mathcal{E}}(\D^2)$ by
$
T_1(f(z_1, z_2))=z_2f(qz_1, z_2)$ and $T_2(f(z_1, z_2))=z_1f(z_1, z_2)$.
The adjoints of $T_1$ and $T_2$ are given by
\[
T_1^*(f(z_1, z_2))=\frac{f(\overline{q}z_1, z_2)-f(\overline{q}z_1, 0)}{z_2} \quad \text{and} \quad  T_2^*(f(z_1, z_2))=\frac{f(z_1, z_2)-f(0, z_2)}{z_1},
\]
respectively. In the series representation form, we have that
\[
T_1\left(\overset{\infty}{\underset{i, j=0}{\sum}} f_{ij} \ z_1^iz_2^j\right)=\overset{\infty}{\underset{i, j=0}{\sum}} f_{ij} \ (qz_1)^iz_2^{j+1} \quad \text{and} \quad T_2\left(\overset{\infty}{\underset{i, j=0}{\sum}} f_{ij} \ z_1^iz_2^j\right)=\overset{\infty}{\underset{i, j=0}{\sum}} f_{ij} \ z_1^{i+1}z_2^j
\]
with their adjoints given as
\[
T_1^*\left(\overset{\infty}{\underset{i, j=0}{\sum}} f_{ij} \ z_1^iz_2^j\right)=\overset{\infty}{\underset{i=0, j=1}{\sum}} f_{ij} \ (\overline{q}z_1)^iz_2^{j-1} \quad \text{and} \quad T_2^*\left(\overset{\infty}{\underset{i, j=0}{\sum}} f_{ij} \ z_1^iz_2^j\right)=\overset{\infty}{\underset{i=1, j=0}{\sum}} f_{ij} \ z_1^{i-1}z_2^j.
\]
It is easy to see that $(T_1, T_2)$ is a doubly $q$-commuting pair of isometries. For $k \in \mathbb{N}$, note that
\[
T_1^k(H^2_{\mathcal{E}}(\D^2))=\left\{\overset{\infty}{\underset{i, j=0}{\sum}} f_{ij} \ z_1^iz_2^j \in H^2_{\mathcal{E}}(\D^2) : f_{ij}=0 \ \ \text{for} \ \ 0 \leq j \leq k-1 \right\}
\]
and 
\[
T_2^k(H^2_{\mathcal{E}}(\D^2))=\left\{\overset{\infty}{\underset{i, j=0}{\sum}} f_{ij} \ z_1^iz_2^j \in H^2_{\mathcal{E}}(\D^2) : f_{ij}=0 \ \ \text{for} \ \ 0 \leq i \leq k-1 \right\}.
\]
Therefore, we have that 
\[
\bigcap_{k=1}^\infty T_1^k(H^2_{\mathcal{E}}(\D^2))=\{0\} \quad \text{and} \quad \bigcap_{k=1}^\infty T_2^k(H^2_{\mathcal{E}}(\D^2))=\{0\}.
\]
It follows from Theorem \ref{thm_Wold} that both $T_1$ and $T_2$ are pure isometries. Therefore, $(T_1, T_2)$ is a doubly $q$-commuting pair of c.n.u. contractions. \qed
\end{eg}

\begin{eg}
Let $\mathcal{E}$ be a Hilbert space and let $\alpha_1, \alpha_2, q$ be scalars satisfying $0<|\alpha_1|, |\alpha_2|<1$ and $|q|=1$. For the doubly $q$-commuting pair of pure isometries $(T_1, T_2)$ on $H^2_{\mathcal{E}}(\D^2)$ as in Example \ref{eg404}, we define $(A_1, A_2)=(\alpha_1T_1, \alpha_2T_2)$ on $H^2_{\mathcal{E}}(\D^2)$. It is easy to see that $(A_1, A_2)$ is a doubly $q$-commuting pair of contractions, and that 
\[
\|A_jf\|=|\alpha_j| \ \|T_jf\|=|\alpha_j|\|f\|<\|f\| \quad (j=1,2)
\]
for every $f \in H^2_{\mathcal{E}}(\D^2) \setminus \{0\}$. Thus, $(A_1, A_2)$ is a doubly $q$-commuting pair of c.n.i. contractions. \qed
\end{eg}	

Using the same algorithm as described in Section \ref{sec03}, we can easily achieve a Levan-type decomposition for any (infinite) family of doubly $Q$-commuting c.n.u. contractions so that each operator in the decomposition is a fundamental c.n.u. contraction. To avoid triviality, we assume that the family consists of infinitely many non-fundamental c.n.u. contractions.
Here we denote the limiting set as $\widetilde{\Delta_{\infty}}$, previously denoted as $\Delta_\infty$ for doubly $Q$-commuting contractions.
 
	\begin{thm}\label{Levan general}
		Let $\mathcal{C}=\{T_r: r \in R \}$ be a doubly $Q$-commuting family of c.n.u. contractions acting on a Hilbert space $\mathcal{H}$ which consists of infinitely many non-fundamental c.n.u. contractions. If $C(R)$ is the set of all functions $\varphi: R \rightarrow\{A_{shift}, A_{cni}\}$ and 
		$
		\widetilde{\Delta_\infty}=\{(\mathcal{C}_\varphi, \mathcal{H}_\varphi): \varphi \in C(R)\}$,	where $\mathcal{C}_\varphi=\{T_r|_{\mathcal{H}_\varphi}: r \in R\}$, then we have the following assertions.
		\begin{enumerate}
			\item  $\mathcal{H}$ admits an orthogonal decomposition $\mathcal{H}=\bigoplus_{\varphi \in C(R)} \mathcal{H}_\varphi$ such that each $\mathcal{H}_\varphi$ is a joint reducing subspace for $\mathcal{T}$ and $Q$.
			
\item  For each $\varphi \in C(R)$, the components of the operator tuple $\{T_r |_{\mathcal{H}_\varphi}: r \in R \}$ are all fundamental c.n.u. contractions.

\item  There is exactly one element, say $\left(\mathcal{C}_1, \mathcal{H}_1\right)$ in $\widetilde{\Delta_\infty}$ such that the components of $\mathcal{C}_1$ are all pure isometries, and this is determined by the constant function $\varphi_1: R \rightarrow\{A_{shift}, A_{cni}\}$ defined by $\varphi_1(r )=A_{shift}$ for all $r \in R$.

\item  There is exactly one element, say $\left(\mathcal{C}_{2^{|R|}}, \mathcal{H}_{2^{|R|}}\right)$ in $\widetilde{\Delta_\infty}$ such that the components of $\mathcal{C}_{2^{|R|}}$ are all c.n.i. contractions, and this is determined by the constant function $\varphi_{2^{|R|}}: R \rightarrow\{A_{shift}, A_{cni}\}$ defined by $\varphi_{2^{|R|}}(r )=A_{cni}$ for all $r \in R$.
	\smallskip 
	
\item  The cardinality of the set $\widetilde{\Delta_\infty}$ is $2^{|R|}$, where $|R|$ is the cardinality of the set $R$.
			
		\end{enumerate}
		One or more members of $\left\{\mathcal{H}_\varphi: \varphi \in C(R)\right\}$ may coincide with the trivial subspace $\{0\}$.
	\end{thm}
	
	\vspace{0.05cm}

\section{Decomposition of $Q$-commuting tuple of contractions}\label{sec05}
	
	\vspace{0.3cm}
	
	\noindent The decomposition results in Section \ref{sec03} are subjected to the condition of double $Q$-commutativity. In this Section, our aim is to find an analogue of an interesting result due to Burdak \cite{BurdakIII} in the $Q$-commuting setting. Before that we need the following terminology.
	
\begin{defn}
A commuting pair of contractions $(T_1, T_2)$ acting on a Hilbert space
$\HS$ is called a \textit{strongly completely non-unitary pair} if there
is no proper subspace $\mathcal{L}$ of $\HS$ that reduces both $T_1, T_2$ and atleast one of $T_1|_{\mathcal{L}}$, $T_2|_{\mathcal{L}}$ is a
unitary.
\end{defn}

\begin{thm}[\cite{BurdakIII}, Theorem 2.1]
         For a commuting pair of contractions $(T_1, T_2)$ acting on a Hilbert
space $\HS$, there is a unique decomposition
         \[
         \HS=\HS^{(uu)} \oplus \HS^{(uc)}\oplus \HS^{(cu)}\oplus \HS(s),
         \]
        where $\HS^{(uu)} , \HS^{(uc)}, \HS^{(cu)}, \HS(s)$ are maximal joint
reducing subspaces for $(T_1, T_2)$ such that the following hold:
        \begin{enumerate}
                        \item $T_{1}|_{\HS^{(uu)} }$ and $T_{2}|_{\HS^{(uu)} }$ are unitaries;
                        \item $T_{1}|_{\HS^{(uc)} }$ is a unitary and $T_{2}|_{\HS^{(uc)} }$ is
a c.n.u. contraction;
                        \item $T_{1}|_{\HS^{(cu)} }$ is a c.n.u. contraction and
$T_{2}|_{\HS^{(cu)} }$ is a unitary;
                        \item $(T_1|_{\HS(s)},T_2|_{\HS(s)})$ is a strongly completely
non-unitary pair of contractions.
                        \end{enumerate}

\end{thm}	

Note that the space $\HS(s)=\HS \ominus (\HS^{(uu)}\oplus\HS^{(uc)}\oplus \HS^{(cu)})$ need not reduce both $T_1, T_2$ to c.n.u. contractions. Also, $\HS(s)$ has no proper nonzero subspace $\mathcal{L}$ reducing $T_1, T_2$ such that atleast one of $T_1|_{\mathcal{L}}, T_2|_{\mathcal{L}}$ is a unitary. Taking cue from these two facts, we define the following classes of $Q$-commuting contractions. 

\begin{defn}
		A $Q$-commuting tuple $\underline{T}=(T_1, \dotsc, T_n)$ of contractions acting on a Hilbert space $\HS$ is said to be
		\begin{enumerate}
			\item \textit{strongly c.n.u.} if the existence of a joint reducing subspace $\mathcal{L}$ of $\underline{T}, Q$ such that atleast $(n-1)$ of $T_1|_\mathcal{L}, \dotsc, T_n|_\mathcal{L}$ are unitaries implies that $\mathcal{L}=\{0\}$;
			\item \textit{strongly pure isometry} if each $T_j$ is an isometry and the existence of a joint reducing subspace $\mathcal{L}$ of $\underline{T}, Q$ such that atleast $(n-1)$ of $T_1|_\mathcal{L}, \dotsc, T_n|_\mathcal{L}$ are unitaries implies that $\mathcal{L}=\{0\}$.
		\end{enumerate}
	\end{defn}

Let us give an example of a $Q$-commuting tuple of strongly pure isometry. 

\begin{eg}\label{eg502}
Let $U$ be a unitary on a Hilbert space $\mathcal E$. Define operators $T_1, T_2$ on $\ell^2(\mathcal E)$ as 
\[
T_1(a_0, a_1, a_2, \dotsc) = (0, a_0,Ua_1,U^2a_2, \dotsc), \quad  T_2(a_0, a_1, a_2, \dotsc) = (0, a_0, a_1, a_2, \dotsc).
\]
Evidently, $T_1$ and $T_2$ are isometries. For $Q : \ell^2(\mathcal E) \to \ell^2(\mathcal E)$ given
by $Q(a_0, a_1, \dotsc) = (Ua_0,Ua_1, \dotsc)$, we have that 
\[
Q^*Q=QQ^*=I, \quad T_1T_2 = QT_2T_1, \quad T_1Q = QT_1 \quad \text{and} \quad T_2Q = QT_2.
\]
A few steps of routine calculations lead to
\[
T_1^*(a_0, a_1, a_2, \dotsc) = (a_1, U^*a_2, {U^*}^2a_3, \dotsc) \quad \text{where} \quad (a_0, a_1, a_2, \dotsc) \in \ell^2(\mathcal E),
\]
and to the fact that ${T_j^*}^n \to 0$ as $n \to \infty$ strongly for $j= 1, 2$. Thus, $(T_1, T_2)$ is a $Q$-commuting tuple of pure isometries and thus, none of them is unitary on any reducing subspace of $\ell^2(\mathcal E)$. Hence, $(T_1,T_2)$ is a strongly pure isometry $Q$-commuting pair. In addition, $T_1T_2^*\ne Q^*T_2^*T_1$, because
\[
Q^*T_2^*T_1(a_0, 0, 0, \dotsc)-T_1T_2^*(a_0, 0, 0, \dotsc)=(U^*a_0, 0, 0, \dotsc) \quad (a_0 \in \mathcal{E}).
\]
Thus, $(T_1, T_2)$ also turns out to be a $Q$-commuting pair that is not doubly $Q$-commuting. \qed
\end{eg}

The following lemma is a natural analogue of Proposition \ref{normal} in the setting of $Q$-commuting contractions. However, in Proposition \ref{normal} we did not have any bound on the modulus of $q$. We skip the proof here as it can be easily imitated from the proof of Proposition \ref{normal}. 

\begin{lem}\label{lem612}
		Let $(T_1, T_2)$ be a $Q$-commuting pair of contractions acting on a Hilbert space $\HS$. If $T_1$ or $T_2$ is normal, then $T_1T_2^*=Q^*T_2^*T_1$ holds.
	\end{lem}	

Now we arrive at the main result of this Section.
	
\begin{thm}\label{thm501}
	Let $(T_1, \dotsc, T_n)$ be a $Q$-commuting tuple of contractions acting on a Hilbert space $\mathcal{H}$. Then $\HS$ admits an orthogonal decomposition $\mathcal{H}=\HS(s) \oplus \left( \underset{1 \leq j \leq n+1}{\oplus}\HS_j\right)$ such that the following hold.
	\begin{enumerate}
		\item $\HS_1, \dotsc, \HS_{n+1}, \HS(s)$ are joint reducing subspaces for $T_1, \dotsc, T_n$ and $Q$.
		
		\item $T_i|_{\HS_j}$ is either a unitary or a c.n.u. contraction for $ 1 \leq i \leq n$ and $1 \leq j \leq n+1$.
			
			\item If $\widetilde \Theta_n$ is the set of all functions $\widetilde \theta: \{1, \dotsc, n\} \to \{A_u, A_{cnu}\}$ such that there are atleast $(n-1)$ values as $A_u$ among $\widetilde \theta(1), \dotsc, \widetilde \theta(n)$, then for every $\widetilde \theta \in \widetilde \Theta_n$, there is a unique subspace $\HS_{t} \ (1 \leq t \leq n+1)$ such that the $k$-th component of $(T_1|_{\HS_{t}}, \dotsc, T_n|_{\HS_{t}})$ is of the type $\widetilde \theta(k)$ for $1 \leq k \leq n$ and $\HS_t$ is the largest such joint reducing subspace for $T_1, \dotsc, T_n$ and $Q$.
				
	\item $(T_1|_{\mathcal{H}(s)}, \dotsc, T_n|_{\mathcal{H}(s)})$ is a strongly c.n.u. $Q$-commuting tuple of contractions.
	\end{enumerate}
	One or more subspaces in the decomposition  may coincide with the subspace $\{0\}$.
\end{thm}

\begin{proof}
	 		Let $Q=\{Q_{ij} : 1 \leq i < j \leq n\}$ and let $\HS^{(dc)}$ denote the following subspace:
			\[
		\overline{\mbox{span}}\bigg\{\mathcal{H}_0 \subseteq \underset{1 \leq i < j \leq n}{\bigcap} \ \mbox{Ker}(T_iT_j^*-Q_{ij}^*T_j^*T_i):  P_{\mathcal{H}_0}T_i=T_iP_{\mathcal{H}_0},  P_{\mathcal{H}_0}Q_{ij}=Q_{ij}P_{\mathcal{H}_0} \bigg\}.
		\]
		Evidently, $\HS^{(dc)}$ is the largest joint reducing subspace for $T_1, \dotsc, T_n$ and $Q$ such that the tuple
		$
		(T_1|_{\HS^{(dc)}}, \dotsc, T_n|_{\HS^{(dc)}})
		$
		consists of doubly $Q$-commuting contractions. We have by Theorem \ref{thm303F} that $\mathcal{H}^{(dc)}$ admits a unique decomposition into an orthogonal sum of $2^n$ joint reducing subspaces, say $\HS^{(dc)}_1, \dotsc, \HS^{(dc)}_{2^n}$ of $T_1, \dotsc, T_n$ and $Q$ such that the following hold.
	
		\begin{enumerate}
			\item[(i)] $T_i|_{\HS^{(dc)}_j}$ is either a unitary or a c.n.u. contraction for $ 1 \leq i \leq n$ and $1 \leq j \leq 2^n$.
			
			\item[(ii)] If $\Theta_n$ is the set of all functions $\theta: \{1, \dotsc, n\} \to \{A_u, A_{cnu}\}$, then for every $\theta \in \Theta_n$, there corresponds a unique subspace $\HS_{t(\theta)}$ with $1 \leq t(\theta) \leq 2^n$, such that the $k$-th component of $(T_1|_{\HS^{(dc)}_{t(\theta)}}, \dotsc, T_n|_{\HS^{(dc)}_{t(\theta)}})$ is of the type $\theta(k)$ for $1 \leq k \leq n$ and $\HS^{(dc)}_{t(\theta)}$ is the largest such joint reducing subspace for $T_1, \dotsc, T_n, Q$.
			
			\item[(iii)] $T_1|_{\HS^{(dc)}_1}, \dotsc, T_n|_{\HS^{(dc)}_1}$ are unitaries and $T_1|_{\HS^{(dc)}_{2^n}}, \dotsc, T_n|_{\HS^{(dc)}_{2^n}}$ are c.n.u. contractions.
\end{enumerate}

Let $\theta_1(k)=A_u$ for $1 \leq k \leq n$. There are only $n$ number of maps $\theta_2, \dotsc, \theta_{n+1}$ in $\Theta_n$ which take the value $A_{cnu}$ exactly at one point. For every $t \in \{1, \dots , n+1\}$, there is a unique maximal joint reducing subspace $\HS_t^{(dc)} \subset \HS^{(dc)}$ of $T_1, \dotsc, T_n, Q$ such that the $k$-th component of $(T_1|_{\HS_{t}}, \dotsc, T_{n}|_{\HS_{t}})$ is of the type $\theta_t(k)$ for $1 \leq k \leq n$. We now show that the subspace $\HS_t^{(dc)}$ is also maximal in $\HS$ in the above sense. Let $\HS_t \subseteq \HS$ be maximal in the above sense. It follows from the definition of $\theta_t$ that atleast $(n-1)$ among $T_1|_{\HS_t}, \dotsc, T_n|_{\HS_t}$ are unitaries. We have by Lemma \ref{lem612} that $(T_1|_{\HS_{t}}, \dotsc, T_{n}|_{\HS_{t}})$ is doubly $Q$-commuting and so, $\HS_t \subset \HS^{(dc)}$. By maximality, it follows that  $\HS_t = \HS_t^{(dc)}$. Consider the following closed joint reducing subspace of $T_1, \dotsc, T_n, Q$ : 
\[
\HS(s)=\HS \ominus (\HS_1 \oplus \dotsc \oplus \HS_{n+1}).
\]
We show that $(T_1|_{\mathcal{H}(s)}, \dotsc, T_n|_{\mathcal{H}(s)})$ is a strongly c.n.u. $Q$-commuting tuple of contractions. Let if possible, there be a closed subspace $\mathcal{L} \subseteq \mathcal{H}(s)$ reducing each $T_j$ and $Q$ such that atleast $(n-1)$ contractions among $T_1|_\mathcal{L}, \dotsc, T_n|_\mathcal{L}$ are unitaries. Without loss of generality, we assume that $T_1|_\mathcal{L}, \dotsc, T_{n-1}|_\mathcal{L}$ are unitaries. We show that 
		$(T_1|_\mathcal{L}, \dotsc,  T_n|_\mathcal{L})
		$
		is a doubly $Q$-commuting tuple. Let $(T_1', \dotsc, T_n')=(T_1|_\mathcal{L}, \dotsc, T_n|_\mathcal{L})$. Since $\mathcal{L}$ reduces $Q$, we have that $T_1', \dotsc, T_{n-1}'$ are normal operators acting on $\mathcal{L}$ such that $T_i'T_j'=Q_{ij}T_j'T_i'$ for $1 \leq i < j \leq n-1$. It follows from Lemma \ref{lem612} that $T_i'T_j'^*=Q_{ij}^*T_j'^*T_i'$. For $1 \leq j \leq n-1$, we have that
		$
		T_jT_n=Q_{jn}T_nT_j$ and thus $ T_j'T_n'=Q_{jn}T_n'T_j'$. Since each $T_j'$ is normal, we have by Lemma \ref{lem612} that $T_j'T_n'^*=Q_{jn}^*T_n'^*T_j'$ for $1 \leq j \leq n-1$. Consequently, we get that $(T_1', \dotsc, T_n')$ is a doubly $Q$-commuting tuple of contractions on $\mathcal{L}$ and hence, $\mathcal{L} \subseteq \mathcal{H}^{(dc)}$. We have by Theorem \ref{thm104} that $\mathcal{L}$ admits an orthogonal decomposition into two closed subspaces reducing $T_n'$, say $\mathcal{L}= \mathcal{L}_{u} \oplus \mathcal{L}_{c}$, such that $T_n'|_{\mathcal{L}_{u}}$ is a unitary and $T_n'|_{\mathcal{L}_{c}}$ is a c.n.u. contraction. It follows from Theorem \ref{Qreducing} that $\mathcal{L}_u, \mathcal{L}_c$ are joint reducing subspaces for $Q$ and  $T_1', \dotsc, T_{n-1}'$. Since $\HS^{(dc)}_{1}$ is the largest closed subspace of $\HS^{(dc)}$ reducing each $T_j$ to a unitary, we have $\mathcal{L}_u \subseteq \HS^{(dc)}_{1}$. Similarly, using the maximal property that $\mathcal{H}^{(dc)}_{\ell}$ (for $ 1 \leq \ell \leq n+1$) is the largest closed subspace of $\mathcal{H}^{(dc)}$ reducing each $T_1, \dotsc, T_{n-1}$ to a unitary and $T_n$ to a c.n.u. contraction, we have that $\mathcal{L}_c \subseteq \mathcal{H}^{(dc)}_{\ell}$. Consequently, we have 
		\[
		\mathcal{L}=\mathcal{L}_u \oplus \mathcal{L}_c \subseteq \mathcal{H}^{(dc)}_{1} \oplus \mathcal{H}^{(dc)}_{\ell} =  \mathcal{H}_{1} \oplus \mathcal{H}_{\ell} \subseteq \HS \ominus \HS(s) \quad \text{and so,} \quad 		\mathcal{L}=\{0\}.
		\] 
Thus, $(T_1|_{\mathcal{H}_{2^n}}, \dotsc, T_n|_{\mathcal{H}_{2^n}})$ is a strongly c.n.u. $Q$-commuting tuple of contractions. The uniqueness follows from the maximality of each subspace $\HS_{t}$ for $1 \leq t \leq n+1$ and the proof is complete.  
\end{proof}

An immediate consequence of the above result is the following decomposition theorem for a $Q$-commuting tuple of isometries.
	
\begin{thm}\label{prop503}
		Let $(V_1, \dotsc, V_n)$ be a $Q$-commuting tuple of isometries acting on a Hilbert space $\mathcal{H}$. Then $\HS$ admits an orthogonal decomposition $\mathcal{H}=\HS(s) \oplus \left( \underset{1 \leq j \leq n+1}{\oplus}\HS_j\right)$ such that the following statements hold.
	\begin{enumerate}
		\item $\HS_1, \dotsc, \HS_{n+1}, \HS(s)$ are joint reducing subspaces for $V_1, \dotsc, V_n$ and $Q$.
		
		\item $V_i|_{\HS_j}$ is either a unitary or a pure isometry for $ 1 \leq i \leq n$ and $1 \leq j \leq n+1$.
		
			\item If $\widetilde \Theta_n$ is the set of all functions $\widetilde \theta: \{1, \dotsc, n\} \to \{A_u, A_{shift}\}$ such that there are atleast $(n-1)$ values as $A_{u}$ among $\widetilde \theta(1), \dotsc, \widetilde \theta(n)$, then for every $\widetilde \theta \in \widetilde \Theta_n$, there is a unique maximal joint reducing subspace $\HS_{t} \ (1 \leq t \leq n+1)$ for $V_1, \dotsc, V_n, Q$ such that the $k$-th component of $(V_1|_{\HS_{t}}, \dotsc, V_n|_{\HS_{t}})$ is of the type $\widetilde \theta(k)$ for $1 \leq k \leq n$.
			
	\item $(V_1|_{\mathcal{H}(s)}, \dotsc, V_n|_{\mathcal{H}(s)})$ is a strongly pure isometry $Q$-commuting tuple.
	\end{enumerate}
One or more subspaces in the decomposition  may coincide with the subspace $\{0\}$.
		
	\end{thm}	

\vspace{0.05cm}

\section{Decomposition of a pair of non-commuting contractions}\label{sec07}
	
		\vspace{0.3cm}
	
	\noindent In this Section, we study a more general class of a pair of contractions (not necessarily commuting). For a $Q$-commuting pair of contractions $(T_1, T_2)$ acting on a Hilbert space $\HS$, there is a unitary $Q$ on $\HS$ such that $T_1T_2=QT_2T_1$. In this Section, we study a pair of contractions with different operator equations involving $Q$, e.g. $T_1T_2=T_2QT_1$ or $T_1T_2=T_2T_1Q$. We prove the decomposition results for such classes of non-commuting pair of contractions $(T_1, T_2)$ under the weaker hypothesis that $Q$ commutes with $T_1T_2$ (not necessarily with $T_1$ and $T_2$).

	\begin{lem}\label{P_unitary}
		Let $(A,B)$ be a pair of contractions on a Hilbert space $\mathcal{H}$ and let $Q$ be a unitary on $\HS$ such that any of the following relations holds:
		\[
		AB=QBA, \quad AB=BQA \quad \mbox{or} \quad AB=BAQ.
		\]	
		Then $AB$ is unitary if and only if both $A$ and $B$ are unitaries.
	\end{lem} 
	
\begin{proof}
		If $A$ and $B$ both are unitaries, then it is clear that $AB$ is a unitary. Conversely, let $AB$ be a unitary and let us denote $P=AB$. Then $A$ is surjective and $B$ is injective. We claim that $A$ and $B$ have bounded inverses. To prove this, we proceed case by case depending on the positions of $Q$. 
		
		\vspace{0.2cm}
		
		\noindent \textit{Case 1:} Let $AB=QBA$. Then $BA=Q^*AB=Q^*P$  and so, $BA$ is a unitary as well. Thus $A$ is injective and hence, bijective. It follows from bounded inverse theorem that $A$ has a bounded inverse and so, $B=Q^*PA^{-1}$ is invertible too.
		
		\vspace{0.2cm}
		
		\noindent \textit{Case 2:} Let $AB=BQA$. Take any $x \in Ker(A)$. Then  
		\[
		Px=ABx=BQAx=0 \quad \text{and so,} \quad x=0
		\] 
		which shows that $A$ is injective. Consequently, $A$ is a bijective operator. By bounded inverse theorem, $A$ has a bounded inverse and hence, $B=PA^{-1}Q^*$ is invertible.
		
		\vspace{0.2cm}
		
		\noindent \textit{Case 3:} Let $AB=BAQ$. Then $BA=ABQ^*=PQ^*$ is a unitary implying that $A$ is injective. Consequently, $A$ is bijective and by bounded inverse theorem, $A$ has a bounded inverse. Thus, $B=PQ^*A^{-1}$ is invertible.
		
		\vspace{0.2cm}
		
		\noindent In either case, the inverses of $A$ and $B$ exist and are bounded. Take any $x \in \mathcal{H}$. Then
		\[
		\|x\|=\|BB^{-1}x\|\leq \|B^{-1}x\| \quad \text{and so,} 
		\quad 
		\|Ax\|=\|ABB^{-1}x\|=\|PB^{-1}x\|=\|B^{-1}x\| \geq \|x\|.
		\]
		Since $A$ is a contraction, we get that $\|Ax\|=\|x\|$ for every $x\in \mathcal{H}$. Thus $A$ is a surjective isometry and hence, a unitary. Similarly, we can prove that $B$ is a unitary. The proof is complete.
	\end{proof}

The following result is crucial and will be used in sequel.

\begin{prop}[\cite{Dou:Muh:Pea}, Proposition 2.2]\label{Douglas}
		Let $A, B$ be contractions acting on Hilbert spaces $\mathcal{H}_1$ and $\mathcal{H}_2$ respectively and let $X \in \mathcal{B}(\mathcal{H}_2, \mathcal{H}_1).$ The operator on $\mathcal{H}_1\oplus \mathcal{H}_2$ defined by 
		\[
		\begin{bmatrix}
			A & X\\
			0 & B\\
		\end{bmatrix}
		\]
		is a contraction if and only if there is a contraction $C \in \mathcal{B}(\mathcal{H}_2, \mathcal{H}_1)$ such that $X=D_{A^*}CD_{B}.$	
	\end{prop}

\begin{lem}\label{FoAB=QBA}
Let $(A,B)$ be a pair of contractions acting on a Hilbert space $\mathcal{H}$ with $P=AB$ and let $Q$ be any unitary on $\HS$. Then the following hold. 
\begin{enumerate} 
\item If $AB=QBA$, then there exist operators $F_1, F_2$ in $\mathcal{B}(\mathcal{D}_P)$ such that
\[
 D_B^2A=D_PF_1D_P \quad \text{and} \quad D_A^2B=D_PF_2D_P.
 \]

\item If $AB=BAQ$, then there exist operators $F_1, F_2$ in $\mathcal{B}(\mathcal{D}_P)$ such that
\[
D_B^2AQ=D_PF_1D_P \quad \text{and} \quad Q^*D_A^2B=D_PF_2D_P.
\] 

\item If $AB=BQA$, then there exist operators $F_1, F_2$ in $\mathcal{B}(\mathcal{D}_P)$ such that 
\[
 D_B^2QA=D_PF_1D_P \quad \text{and} \quad D_A^2B=D_PF_2D_P.
 \]
 \end{enumerate}
\end{lem}

 \begin{proof} 
 We divide the proof into three cases depending on the positions of $Q$ in the $Q$-commuting relations. 
 
 \smallskip
 
 \noindent \textbf{Case 1.} Assume that $AB=QBA$. It follows from Proposition \ref{Douglas} that $ \begin{bmatrix} B^* & D_B^2\\ 0 & B\\ \end{bmatrix} $ acting on $\mathcal{H}\oplus \mathcal{H}$ is a contraction. Then the operator $X_1$, where
 \[ 
 X_1=\begin{bmatrix} QP^* & QD_B^2A\\ 0 & P\\ \end{bmatrix}= \begin{bmatrix} QB^*A^* & QD_{B}^2A\\ 0 & QBA\\ \end{bmatrix}= \begin{bmatrix} Q & 0\\ 0 & Q\\ \end{bmatrix}\begin{bmatrix} B^* & D_{B}^2\\ 0 & B\\ \end{bmatrix}\begin{bmatrix} A^* & 0\\ 0 & A\\ \end{bmatrix}, 
 \]
 is also a contraction on $\mathcal{H}\oplus \mathcal{H}$. Again by Proposition \ref{Douglas}, there exists $G_1 \in \mathcal{B}(\mathcal{H})$ such that $QD_B^2A=D_{PQ^*}G_1D_P$. Since $Q$ is a unitary, we have that $D^2_{PQ^*}=I-QP^*PQ^*=QD_P^2Q^*$. By spectral theorem, $D_{PQ^*}=QD_PQ^*$. Thus \begin{equation}\label{D_PQ*} D_B^2A=Q^*D_{PQ^*}G_1D_P=D_PQ^*G_1D_P=D_{AB}Q^*G_1D_{AB}. \end{equation} Since $BA=Q^*AB$, we interchange $A, B$ and replace $Q$ by $Q^*$ in (\ref{D_PQ*}) and obtain $G_2 \in \mathcal{B}(\mathcal{H})$ such that $D_A^2B=D_{BA}QG_2D_{BA}$. Since $Q$ is a unitary, we have $D_{BA}=D_{Q^*P}=D_P$ and hence, \begin{equation}\label{D_PQ} 
 \quad D_A^2B=D_{BA}QG_2D_{BA}=D_{P}QG_2D_{P}. 
 \end{equation}
 Let us define
 $ F_1:=P_{\mathcal{D}_P}Q^*G_1|_{\mathcal{D}_P}$ and $F_2:=P_{\mathcal{D}_P}QG_2|_{\mathcal{D}_P}$,
 where $P_{\mathcal{D}_P}$ is the projection of $\HS$ onto $\mathcal{D}_P$. Since $D_PP_{\mathcal{D}_P}=P_{\mathcal{D}_P}D_P=D_P$, we have that \[ D_PF_1D_P=D_PP_{\mathcal{D}_P}Q^*G_1D_P=D_PQ^*G_1D_P=D_B^2A \quad [\text{by (\ref{D_PQ*})}] \] and \[ D_PF_2D_P=D_PP_{\mathcal{D}_P}QG_2D_P=D_PQG_2D_P=D_A^2B \qquad [\text{by (\ref{D_PQ})}]. \] 
 
 \medskip
 
  \noindent \textbf{Case 2.} We now assume that $AB=BAQ$. It follows from Proposition \ref{Douglas} that $ \begin{bmatrix} B^* & D_B^2\\ 0 & B\\ \end{bmatrix} $ is a contraction on $\mathcal{H}\oplus \mathcal{H}$. Then the operator $X_2$, where
   \[ 
  X_2= \begin{bmatrix} P^* & D_B^2AQ\\ 0 & P\\ \end{bmatrix}= \begin{bmatrix} B^*A^* & D_B^2AQ\\ 0 & BAQ\\ \end{bmatrix}= \begin{bmatrix} B^* & D_B^2\\ 0 & B\\ \end{bmatrix}\begin{bmatrix} A^* & 0\\ 0 & AQ\\ \end{bmatrix}, 
   \] 
   is also a contraction on $\mathcal{H}\oplus \mathcal{H}$. Again by Proposition \ref{Douglas}, there exists $G_1 \in \mathcal{B}(\mathcal{H})$ such that \begin{equation}\label{eqn703} D_B^2AQ=D_{AB}G_1D_{AB}=D_{P}G_1D_P. \end{equation} Since $BA=ABQ^*$, one can interchange $A, B$ and replace $Q$ by $Q^*$ in (\ref{eqn703}) and obtain $G_2 \in \mathcal{B}(\mathcal{H})$ such that $D_A^2BQ^*=D_{BA}G_2D_{BA}$. Since $Q$ is a unitary, we have that $D_{BA}^2=D_{PQ^*}^2=QD_P^2Q^*$. By spectral theorem, $D_{BA}=QD_PQ^*$. Thus $D_{A}^2BQ^*=QD_{P}Q^*G_2QD_{P}Q^*$ and so, \begin{equation}\label{eqn704} Q^*D_{A}^2B=D_{P}Q^*G_2QD_{P}. \end{equation}
   Let us define $ F_1:=P_{\mathcal{D}_P}G_1|_{\mathcal{D}_P}$ and $F_2:=P_{\mathcal{D}_P}Q^*G_2Q|_{\mathcal{D}_P}$.
  Since $D_PP_{\mathcal{D}_P}=P_{\mathcal{D}_P}D_P=D_P$, we have by (\ref{eqn703}) and (\ref{eqn704}) that $ D_PF_1D_P=D_PG_1D_P=D_B^2AQ$ and $D_PF_2D_P=D_PQ^*G_2QD_P=Q^*D_A^2B$ respectively. 
  
  \medskip 
  
  \noindent \textbf{Case 3.} Let us assume that $AB=BQA$. It follows from Proposition \ref{Douglas} that $ \begin{bmatrix} B^* & D_B^2\\ 0 & B\\ \end{bmatrix} $ is a contraction on $\mathcal{H}\oplus \mathcal{H}$. Then the operator $X_3$, where
   \[
   X_3=\begin{bmatrix} P^* & D_B^2QA\\ 0 & P\\ \end{bmatrix}= \begin{bmatrix} B^*A^* & D_B^2QA\\ 0 & BQA\\ \end{bmatrix}= \begin{bmatrix} B^* & D_B^2\\ 0 & B\\ \end{bmatrix}\begin{bmatrix} A^* & 0\\ 0 & QA\\ \end{bmatrix},
    \]
    is also a contraction on $\mathcal{H}\oplus \mathcal{H}$. By Proposition \ref{Douglas}, there exists $G_1 \in \mathcal{B}(\mathcal{H})$ such that \begin{equation}\label{eqn705} D_B^2QA=D_{AB}G_1D_{AB}=D_{P}G_1D_P. \end{equation} Let $S=QB$ and $R=QA$. Then $ RQ^*S=QAB=QBQA=SR. $ One can replace $A, B$ and $Q$ with $S, R$ and $Q^*$ respectively in (\ref{eqn705}) and obtain $G_2 \in \mathcal{B}(\mathcal{H})$ such that $ D_R^2Q^*S=D_{SR}G_2D_{SR} . $ Note that \[ D_{R}^2Q^*S=(I-R^*R)Q^*S=(I-A^*Q^*QA)Q^*QB=D_A^2B \] and \[ D_{SR}^2=I-(SR)^*SR=I-(QBQA)^*QBQA=I-(AB)^*AB=D_P^2. \] By spectral theorem, $D_{SR}=D_P$ and consequently, we have that \begin{equation}\label{eqn706}
     \begin{split} D_A^2B=D_{R}^2Q^*S=D_{SR}G_2D_{SR}=D_PG_2D_P. \end{split} 
    \end{equation} 
    Let us define $F_1:=P_{\mathcal{D}_P}G_1|_{\mathcal{D}_P}$ and $ F_2:=P_{\mathcal{D}_P}G_2|_{\mathcal{D}_P}$. It follows from (\ref{eqn705}) \& (\ref{eqn706}) that $D_B^2QA=D_PF_1D_P$ and $D_A^2B=D_PF_2D_P$. The proof is complete. \end{proof}

The next lemma is an important result which we shall use later.

	\begin{lem}\label{TUcnu}
		Let $T$ be a contraction acting on a Hilbert space $\mathcal{H}$ and let $U$ be a unitary on $\mathcal{H}$ such that $TU=UT$. Then $T$ is a c.n.u. contraction if and only if $TU$ is a c.n.u. contraction.
	\end{lem}
	
	\begin{proof}
		Let $T$ be a c.n.u. contraction. Since $TU$ is a contraction on $\mathcal{H},$ it follows from Theorem \ref{thm104} that $\mathcal{H}$ admits an orthogonal decomposition into two closed reducing subspaces $\mathcal{H}_1$ and $\mathcal{H}_2$ for $TU$ such that $TU|_{\mathcal{H}_1}$ is a unitary and $TU|_{\mathcal{H}_2}$ is a c.n.u. contraction. Moreover, the space $\mathcal{H}_1$ is given by 
		\[
		\mathcal{H}_1=\{x \in \mathcal{H}: \|(TU)^nx\|=\|x\|=\|(TU)^{*n}x\|, \ n \in \mathbb{N}\}.
		\]
		Since $U$ commutes with $T$, we have that	
		\[
		\HS_1=\{x \in \mathcal{H}: \|T^nx\|=\|x\|=\|T^{*n}x\|, \ n \in \mathbb{N}\}.
		\]
Again, by Theorem \ref{thm104}, $\HS_1$ is the maximal closed subspace of $\mathcal{H}$ reducing $T$ and on which $T$ acts as a unitary. Since $T$ is a c.n.u. contraction , we have that $\mathcal{H}_1=\{0\}$. Consequently, $\mathcal{H}=\mathcal{H}_2$ and the desired conclusion follows. To see the converse, let $T'=TU$ and let $T'$ be a c.n.u. contraction. Then $T'U^*=U^*T'=T$ and by the previous case, $T\; (=T'U^*)$ is a c.n.u. contraction. 
	\end{proof}
	
\begin{rem}\label{rem707}
		Let $A, B$ be contractions (not necessarily commuting) acting on a Hilbert space $\HS$ such that any of the following relations holds: 
		\[
		AB=QBA, \quad AB=BQA, \quad  AB=BAQ,
		\]
		for some unitary $Q \in \mathcal B(\HS)$. Let $P=AB$ and $PQ=QP$. Since $P$ is a contraction, $P$ admits a canonical decomposition say, $P_1 \oplus P_2$ with respect to the orthogonal decomposition $\HS=\HS_1 \oplus \HS_2$. Since $Q$ is a unitary, Fuglede's theorem \cite{Fuglede} implies that $Q$ doubly commutes with $P$. It follows from Theorem \ref{Qreducing} that $\HS_1$ and $\HS_2$ reduce $Q$.
	\end{rem}

Actually, we want to show that the subspaces $\HS_1, \HS_2$ as in Remark \ref{rem707} reduce $A$ and $B$ as well.

\begin{thm}\label{reducingII} Let $(A,B)$ be a pair of contractions acting on a Hilbert space $\HS$ and let $Q$ be any unitary in $\mathcal B(\HS)$ such that any of the following relations holds: 
\[ AB=QBA, \quad AB=BQA \quad \text{or} \quad AB=BAQ. 
\]
Let $P=AB$ and let $Q$ commute with $P$. If $P=P_1 \oplus P_2$ is the canonical decomposition of $P$ with respect to the orthogonal decomposition $\HS=\HS_1 \oplus \HS_2$, then $\mathcal{H}_1, \mathcal{H}_2$ reduce $A, B$ and $Q$. 
\end{thm}

\begin{proof} It follows from Remark \ref{rem707} that $\HS_1, \HS_2$ reduce $Q$. To avoid triviality, we assume that $P$ is neither a unitary nor a c.n.u. contraction. With respect to $\mathcal{H}=\mathcal{H}_1\oplus \mathcal{H}_2,$ let \[ A=\begin{bmatrix} A_{11} & A_{12}\\ A_{21} & A_{22}\\ \end{bmatrix}, \quad B=\begin{bmatrix} B_{11} & B_{12}\\ B_{21} & B_{22}\\ \end{bmatrix}, \quad Q=\begin{bmatrix} Q_1 & 0 \\ 0& Q_2\\ \end{bmatrix} \quad \text{and} \quad P=\begin{bmatrix} P_1 & 0\\ 0 & P_2\\ \end{bmatrix}. \] Note that $P_1$ is a unitary and $P_2$ is a c.n.u. contraction. We split the proof into three parts corresponding to three different positions of $Q$ in the $Q$-commuting relations. 

\medskip 

\noindent \textbf{Case 1.} Let $AB=QBA$. It follows from Part $(1)$ of Lemma \ref{FoAB=QBA} that there exist operators $F_1$ and $F_2$ acting on $\mathcal{D}_P$ such that $ D^2_BA=D_PF_1D_P$ and $D_A^2B=D_PF_2D_P$. It is easy to see that \[ D_B^2A=A-B^*BA=A-B^*Q^*AB=A-B^*Q^*P \quad \mbox{and} \quad D_A^2B=B-A^*AB=B-A^*P. \] Consequently, we have that \[ A-B^*Q^*P=D_PF_1D_P \quad \text{and} \quad B-A^*P=D_PF_2D_P. \] With respect to the decomposition $\mathcal{D}_P=\mathcal{D}_{P_1} \oplus \mathcal{D}_{P_2}=\{0\}\oplus \mathcal{D}_{P_2},$ let \[ F_1=\begin{bmatrix} 0 & 0\\ 0 & F_1' \end{bmatrix} \quad \text{and} \quad F_2=\begin{bmatrix} 0 & 0\\ 0 & F_2' \end{bmatrix}. \] Then, from $A-B^*Q^*P=D_PF_1D_P$, we have \begin{align} A_{11} &=B_{11}^*Q_1^*P_1, \quad A_{12} =B_{21}^*Q_2^*P_2, \label{eqn1.2} \\ A_{21} &=B_{12}^*Q_1^*P_1, \quad A_{22}=B_{22}^*Q_2^*P_2+D_{P_2} F_{1}' D_{P_2} \label{eqn2.2}. \end{align} Similarly, from $B-A^* P=D_P F_2 D_P$, we have \begin{align} B_{11} &=A_{11}^* P_1, \quad B_{12} =A_{21}^* P_2, \label{eqn3.2}\\ B_{21} &=A_{12}^* P_1, \quad B_{22} =A_{22}^* P_2 + D_{P_2} F_{2}' D_{P_2} \label{eqn4.2}. \end{align} Since $APQ^*=PA$, we have 
\begin{align} A_{11} P_1Q_1^* &=P_1 A_{11}, \quad A_{12}P_2Q_2^*=P_1 A_{12}, \label{eqn5.2} \\ A_{21} P_1 Q_1^*&=P_2 A_{21}, \quad A_{22} P_2Q_2^*=P_2 A_{22} \label{eqn6.2}. 
\end{align}
Also, from $QBP=PB,$ we have 
\begin{align} Q_1B_{11} P_1 &=P_1 B_{11}, \quad Q_1B_{12} P_2=P_1 B_{12}, \label{eqn7.2} \\ Q_2B_{21} P_1 &=P_2 B_{21}, \quad Q_2B_{22} P_2=P_2 B_{22}. \label{eqn8.2} 
\end{align}
Since $PQ=QP$, we have by (\ref{eqn3.2}), (\ref{eqn2.2}) and (\ref{eqn6.2}) that
\begin{equation}\label{eqn9.2} P_2 P_2^* A_{21}=P_2 B_{12}^*=P_2A_{21}P_1^*Q_1=A_{21}. \end{equation}
It follows from (\ref{eqn6.2}), (\ref{eqn3.2}) and (\ref{eqn2.2}) that 
\begin{equation}\label{eqn10.2} P_2^* P_2 A_{21}=P_2^*A_{21}P_1Q_1^*=B_{12}^*P_1Q_1^*=A_{21}.
\end{equation}
 By (\ref{eqn6.2}), the range of $A_{21}$ is invariant under $P_2$ and $ P_2^* A_{21}P_1=P_2^* P_2 A_{21}Q_1=A_{21}Q_1. $ Thus, the range of $A_{21}$ is invariant under $P_2^*$ since $P_1$ is a unitary acting on $\HS_1$. It follows from (\ref{eqn9.2}) and (\ref{eqn10.2}) that $P_2$ is a unitary on the range of $A_{21}$. Hence, $A_{21}=0$ as $P_2$ is a c.n.u. contraction.

 \medskip 
 
Next we show that $B_{21}=0$. Let $R=Q_2^*P_2$. Then, $R=P_2Q_2^*$ as $P_2$ and $Q_2$ doubly commute. It follows from Lemma \ref{TUcnu} that $R$ is a c.n.u. contraction. From (\ref{eqn8.2}), (\ref{eqn1.2}) and (\ref{eqn4.2}), we have
\begin{equation}\label{eqn11.2} R^*RB_{21}=P_2^*P_2B_{21}=P_2^*Q_2B_{21}P_1=A_{12}^*P_1=B_{21}. 
\end{equation}

 Using (\ref{eqn1.2}), (\ref{eqn4.2}) and (\ref{eqn8.2}), we have \begin{equation}\label{eqn12.2} RR^*B_{21}=P_2P_2^*B_{21}=P_2Q_2^*A_{12}^*=P_2Q_2^*B_{21}P_1^*=B_{21}. \end{equation} It follows from (\ref{eqn8.2}) that \begin{equation}\label{eqn13.2} RB_{21}=Q_2^*P_2B_{21}=B_{21}P_1 \end{equation} and so, \begin{equation}\label{eqn14.2} R^*B_{21}P_1=R^*RB_{21}=B_{21}, 
 \end{equation}
 where the last equality follows from (\ref{eqn11.2}). Since $P_1$ is a unitary, we have by (\ref{eqn13.2}) and (\ref{eqn14.2}) that the range of $B_{21}$ is invariant under $R$ and $R^*$ respectively. By (\ref{eqn11.2}) \& (\ref{eqn12.2}), we have that $R$ acts as a unitary of the range of $B_{21}.$ Since $R$ is a c.n.u. contraction, $B_{21}=0$. It follows from (\ref{eqn1.2}) and (\ref{eqn3.2}) that $A_{12}=0$ and $B_{12}=0$ respectively. Thus, $\mathcal{H}_1$ and $\mathcal{H}_2$ reduce $A, B$. 
 
 \medskip 
 
 \noindent \textbf{Case 2.} Let $AB=BAQ$. It follows from Fuglede's Theorem \cite{Fuglede} that $P$ and $Q$ doubly commute. Then 
 \[ Q^*AB=Q^*P=PQ^*=ABQ^*=BAQQ^*=BA \quad \text{and so,} \quad AB=QBA. 
 \]
 The desired conclusion follows from Case 1.
 
  \medskip 
  
  \noindent \textbf{Case 3.} Let $AB=BQA$. By Part $(3)$ of Lemma \ref{FoAB=QBA}, there exist operators $F_1$ and $F_2$ on $\mathcal{D}_P$ such that $ D^2_BQA=D_PF_1D_P$ and $D_A^2B=D_PF_2D_P$. Then 
  \[ 
  D_B^2QA=QA-B^*BQA=QA-B^*AB=QA-B^*P \quad \mbox{and} \quad D_A^2B=B-A^*AB=B-A^*P. 
  \] 
  Thus, we have that $ QA-B^*P=D_PF_1D_P$ and $ B-A^*P=D_PF_2D_P$. With respect to the decomposition $\mathcal{D}_P=\mathcal{D}_{P_1} \oplus \mathcal{D}_{P_2}=\{0\}\oplus \mathcal{D}_{P_2},$ let $ F_i=\begin{bmatrix} 0 & 0\\ 0 & F_i' \end{bmatrix}$ for $i=1,2$. From $QA-B^*P=D_PF_1D_P$, we have \begin{align} Q_1A_{11} &=B_{11}^*P_1, \quad Q_1A_{12} =B_{21}^*P_2, \label{eqn1.3} \\ Q_2A_{21} &=B_{12}^*P_1, \quad Q_2A_{22}=B_{22}^*P_2+D_{P_2} F_{1}' D_{P_2} \label{eqn2.3}. \end{align} Similarly, from $B-A^* P=D_P F_2 D_P$, we have \begin{align} B_{11} &=A_{11}^* P_1, \quad B_{12} =A_{21}^* P_2, \label{eqn3.3}\\ B_{21} &=A_{12}^* P_1, \quad B_{22} =A_{22}^* P_2 + D_{P_2} F_{2}' D_{P_2} \label{eqn4.3}. \end{align} Since $AP=PQA$, we have \begin{align} A_{11} P_1 &=P_1Q_1A_{11}, \quad A_{12}P_2=P_1Q_1A_{12}, \label{eqn5.3} \\ A_{21} P_1 &=P_2Q_2A_{21}, \quad A_{22} P_2=P_2Q_2A_{22} \label{eqn6.3}. \end{align} Also, from $BQP=PB,$ we have \begin{align} B_{11} Q_1P_1 &=P_1 B_{11}, \quad B_{12} Q_2P_2=P_1 B_{12}, \label{eqn7.3} \\ B_{21} Q_1P_1 &=P_2 B_{21}, \quad B_{22}Q_2 P_2=P_2 B_{22}. \label{eqn8.3} \end{align} Since $P$ and $Q$ commute, using (\ref{eqn3.3}), (\ref{eqn2.3}) and (\ref{eqn6.3}), we have \begin{equation}\label{eqn9.3} P_2 P_2^* A_{21}=P_2 B_{12}^*=P_2Q_2A_{21}P_1^*=A_{21}. \end{equation} Using (\ref{eqn6.3}), (\ref{eqn3.3}) and (\ref{eqn2.3}), we have \begin{equation}\label{eqn10.3} P_2^* P_2 A_{21}=Q_2^*P_2^*A_{21}P_1=Q_2^*B_{12}^*P_1=A_{21}. \end{equation} By (\ref{eqn6.3}), it follows that the range of $A_{21}$ is invariant under $P_2Q_2$, and that
  \[
   P_2^*Q_2^* A_{21}=P_2^* P_2 A_{21}P_1^*=A_{21}P_1^*.
  \]
  Therefore, the range of $A_{21}$ is invariant under $(P_2Q_2)^*$. By (\ref{eqn9.3}) \& (\ref{eqn10.3}), it follows that $P_2Q_2$ acts as a unitary on the range of $A_{21}$. Since $P_2$ is a c.n.u. contraction and $Q_2$ is a unitary, Lemma \ref{TUcnu} implies that $P_2Q_2$ is a c.n.u. contraction. Therefore, $A_{21}=0.$ \medskip Next, we show that $B_{21}=0$. From (\ref{eqn8.3}), (\ref{eqn1.3}) and (\ref{eqn4.3}), we have \begin{equation}\label{eqn11.3} P_2^*P_2B_{21}=P_2^*B_{21}Q_1P_1=A_{12}^*Q_1^*Q_1P_1=A_{12}^*P_1=B_{21}. \end{equation} Similarly, from (\ref{eqn1.3}), (\ref{eqn4.3}) and (\ref{eqn8.3}), we have \begin{equation}\label{eqn12.3} P_2P_2^*B_{21}=P_2A_{12}^*Q_1^*=P_2B_{21}P_1^*Q_1^*=B_{21}. \end{equation} It follows from (\ref{eqn8.3}) that \begin{equation}\label{eqn13.3} P_2B_{21}=B_{21}Q_1P_1 \end{equation} and so, \begin{equation}\label{eqn14.3} P_2^*B_{21}P_1=P_2^*P_2B_{21}Q_1^*=B_{21}Q_1^*. \end{equation} Thus the range of $B_{21}$ is invariant under $P_2$ and $P_2^*$. It follows from (\ref{eqn11.3}) \& (\ref{eqn12.3}) that $P_2$ acts as a unitary of the range of $B_{21}$. Since $P_2$ is a c.n.u. contraction, we get that $B_{21}=0$. Consequently, $A_{12}=0=B_{12}$ by the virtue of (\ref{eqn1.3}) and (\ref{eqn3.3}). The proof is now complete. \end{proof}	
Finally, we need the following terminology to present the main result of this Section.
	
\begin{defn}
		A pair of contractions $(A, B)$ (not necessarily commuting) acting on a Hilbert space $\HS$ is said to be a \textit{totally c.n.u. pair} if whenever there is a proper closed subspace $\mathcal{L}$ of $\HS$ which reduces both $A$ and $B$ to unitaries, then $\mathcal{L}=\{0\}$.  
	\end{defn}
An example of a totally c.n.u. pair of contractions will be appropriate here.
\begin{eg}
Let $U$ be a unitary on a Hilbert space $\mathcal E$ and let $T_1, T_2 \in \mathcal{B}(\ell^2(\mathcal E))$ be given by
\[
T_1(a_0, a_1, a_2, \dotsc) = (0, a_0,Ua_1,U^2a_2, \dotsc) \quad \text{and} \quad T_2(a_0, a_1, a_2, \dotsc) = (0, a_0, a_1, a_2, \dotsc) .
\]
Let us define $Q : \ell^2(\mathcal E) \to \ell^2(\mathcal E), Q(a_0, a_1, \dotsc) = (Ua_0,Ua_1, \dotsc)$. Recall from Example \ref{eg502} that $(T_1, T_2)$ is a pair of pure isometries. It is not difficult to see that 
\[
QT_1=T_1Q, \quad QT_2=T_2Q, \quad T_1T_2=QT_2T_1, \quad T_1T_2=T_2QT_1 \quad \text{and} \quad T_1T_2=T_2T_1Q.
\]
Let $\alpha_1, \alpha_2$ be scalars satisfying $0<|\alpha_1|, |\alpha_2|<1$ and let $(A_1, A_2)=(\alpha_1T_1, \alpha_2T_2)$. It follows that $\|A_jh\|=|\alpha_j| \ \|T_jh\|=|\alpha_j| \|h\|<\|h\|$ for every non-zero $h \in \ell^2(\mathcal E)$ and $j=1, 2$. Therefore, $(A_1, A_2)$ is a totally c.n.u. pair of contractions that satisfies $A_1A_2=QA_2A_1=A_2QA_1=A_2A_1Q$ and $A_1A_2Q=QA_1A_2$. \qed
\end{eg}	
Below we present the main result of this Section.
	
	\begin{thm}\label{thm712}
		Let $(A,B)$ be a pair of contractions acting on a Hilbert space $\HS$ and let $Q\in \mathcal B(\HS)$ be a unitary such that any of the following relations holds: 
		\[
		AB=QBA, \quad AB=BQA \quad \text{or} \quad AB=BAQ.
		\]
Let $P=AB$ and let $QP=PQ$. If $P=P_1 \oplus P_2$ is the canonical decomposition of $P$ with respect to the orthogonal decomposition $\HS=\HS_1 \oplus \HS_2$, then the following hold:  
		\begin{enumerate}
			\item $\mathcal{H}_1, \HS_2$ are joint reducing subspaces for $A, B$ and $Q$;
			\item $\mathcal{H}_1$ is the largest closed reducing subspace for $A,B$ such that both $A|_{\mathcal{H}_1}, B|_{\mathcal{H}_1}$ are unitaries;
			\item $(A|_{\mathcal{H}_2}, B|_{\mathcal{H}_2})$ is a totally c.n.u. pair of contractions;
			\end{enumerate}
	\end{thm}
	
	\begin{proof} 
		It follows from Theorem \ref{reducingII} that $\mathcal{H}_1, \mathcal{H}_2$ reduce $A, B$ and $Q$. We have by Lemma \ref{P_unitary} that  $A|_{\mathcal{H}_1}, B|_{\mathcal{H}_1}$ are unitaries. Let $\mathcal{L}$ be a closed subspace of $\HS$ that reduces $A$ and $B$ to unitaries. Since $P=AB$, it is immediate that $\mathcal{L}$ is a reducing subspace for $P$. By Lemma \ref{P_unitary}, $P|_\mathcal{L}$ is a unitary and so, $\mathcal{L} \subseteq \HS_1$ as $\HS_1$ is the largest subspace of $\HS$ that reduces $P$ to a unitary. Thus, $\mathcal{H}_1$ is also the largest closed reducing subspace for $A,B$ such that both $A|_{\mathcal{H}_1}, B|_{\mathcal{H}_1}$ are unitaries. Let $\mathcal{L} \subseteq \HS_2$ be a proper closed joint reducing subspace for $A, B$ such that $A|_\mathcal{L}, B|_\mathcal{L}$ are unitaries. It follows from Lemma \ref{P_unitary} that $P|_\mathcal{L}$ is a unitary. Therefore, $\mathcal{L} \subseteq \HS_1 \cap \HS_2=\{0\}$ and so, $(A|_{\mathcal{H}_2}, B|_{\mathcal{H}_2})$ is a totally c.n.u. pair of contractions. The proof is now complete. 
	\end{proof}

	\vspace{0.2cm}
	
	\noindent \textbf{Funding.} The first named author is supported by the Seed Grant of IIT Bombay, the CDPA and the `Core Research Grant' with Award No. CRG/2023/005223 of Science and Engineering Research Board (SERB), India. The second named author is supported by the Prime Minister's Research Fellowship (PMRF ID 1300140), Government of India.
	
	\section{Data Availability Statement}

\noindent (1) Data sharing is not applicable to this article as no datasets were generated or analysed during the current study.

\smallskip

\noindent (2) In case any datasets are generated during and/or analysed during the current study, they must be available from the corresponding author on reasonable request.


\section{Declarations}


\noindent \textbf{Ethical Approval.} This declaration is not applicable.

\smallskip

\noindent \textbf{Competing interests.} There are no competing interests.

\smallskip

\noindent \textbf{Authors' contributions.} This declaration is not applicable.

\vspace{0.3cm}


\begin{thebibliography}{9}

\vspace{0.3cm}

\bibitem{Albrecht}
                E. Albrecht, \textit{Spectral decompositions for systems of commuting operators},
Proc. Roy. Irish Acad. Sect. A, 81 (1981), 81 -- 98.

 \medskip 
		
		\bibitem{Nagy} H. Bercovici, C. Foias, L. Kerchy and B. Sz.-Nagy, \textit{Harmonic analysis of operators on Hilbert space}, Universitext Springer, New York, 2010.
		
		\medskip
		
		     \bibitem{BCL} 
                C. A. Berger, L. A. Coburn and A. Lebow, \textit{Representation and index theory for
$C^*$-algebras generated by commuting isometries}, J. Funct. Anal., 27 (1978), 51
-- 99.

\medskip
             
         \bibitem{BPP}  
 B. Bisai, S. Pal and P. Sahasrabuddhe, \textit{On $q$-commuting co-extensions and $q$-commutant lifting}, Linear Algebra Appl., 658 (2023), 186 -- 205. 
 
 \medskip
		
		\bibitem{BurdakI}
		Z. Burdak, \textit{On decomposition of pairs of commuting isometries}, Ann. Polon. Math., 84 (2004), 121 -- 135.
		
		\medskip

		\bibitem{BurdakIII}
		Z. Burdak, \textit{On a decomposition for pairs of commuting contractions}, Studia Math., 181 (2007), 33 -- 45.		

		\medskip
		
		\bibitem{BurdakII}
		Z. Burdak, M. Kosiek and M. S\l oci\'{n}ski, \textit{The canonical Wold decomposition of commuting isometries with finite dimensional wandering spaces}, Bull. Sci. Math., 137 (2013), 653 -- 658.
		
		\medskip
		
		\bibitem{Cate}
		X. Catepill\'{a}n, M. Ptak and W. Szyma\'{n}ski, \textit{Multiple canonical decompositions of families of operators and a model of quasinormal families}, Proc. Amer. Math. Soc., 121 (1994), 1165 -- 1172.
		
		\medskip
		
		
		\bibitem{Dou:Muh:Pea} R. G. Douglas, P. S. Muhly and C. Pearcy, \textit{Lifting commuting  operators}, Michigan Math. J., 15 (1968), 385 -- 395. 
		
		\medskip
		
			\bibitem{Esch}
		J. Eschmeier, \textit{Invariant subspaces for commuting contractions}, J. Operator Theory, 45 (2001), 413 -- 443.
		
		\medskip
		
		
		\bibitem{Fuglede}
		B. Fuglede, \textit{A commutativity theorem for normal operators}, Proc. Nat. Acad. Sci., 36 (1950), 35 -- 40.
		
		\medskip
		
		\bibitem{K.M.}
		D. K. Keshari and N. Mallick, \textit{$q$-commuting dilation}, Proc. Amer. Math. Soc., 147 (2019), 655 -- 669.
		
		 \medskip

	
		\bibitem{Levan}
		N. Levan, \textit{Canonical decompositions of completely nonunitary contractions}, J. Math. Anal. Appl., 101 (1984), 514 -- 526.
		
		\medskip
		
		\bibitem{Maji}
		S. Majee and A. Maji, \textit{Wold-type decomposition for $\mathcal{U}_n$-twisted contractions}, J. Math. Anal. Appl., 525 (2023), Paper No. 127148.
		
		\medskip
		
				
		\bibitem{Pal}
		S. Pal, \textit{Common reducing subspaces and decompositions of contractions}, Forum Math., 34 (2022), 1313 -- 1332.
		
		\medskip
		
		
		
		\bibitem{Popovici}
		D. Popovici, \textit{A Wold-type decomposition for commuting isometric pairs}, Proc. Amer. Math. Soc., 132 (2004), 2303 -- 2314.
		
		\medskip
		
		\bibitem{Putnam}
		C. R. Putnam, \textit{On normal operators in Hilbert space}, Amer. J. Math., 73 (1951), 357 -- 362.
		
		\medskip
		
		\bibitem{Slocinski}
		M. Słociński, \textit{On the Wold-type decomposition of a pair of commuting isometries}, Ann. Polon. Math., 37 (1980), 255 -- 262.
		
		\medskip
	
		\bibitem{Wold}
		H. Wold, \textit{A study in the analysis of stationary time series}, 2nd ed., Almqvist $\&$ Wiksell, Stockholm, 1954.
		
		\vspace{0.5cm}
		
	\end{thebibliography}
	\end{document}